\def\th@plain{\slshape}                                        %
\newcommand{\ooi}{[0,1)}
\newcommand{\ooii}{[0,1]}
\newcommand{\Zbb}{\mathbb{Z}}
\newcommand{\Qbb}{\mathbb{Q}}
\newcommand{\Rbb}{\mathbb{R}}
\newcommand{\Cbb}{\mathbb{C}}
\newcommand{\one}{{\rm 1\mskip-4mu l}}
\newcommand{\ud}{\,\mathrm{d}}
\newcommand{\bs}{\backslash}
\newcommand{\p}{_{\ge0}}
\newcommand{\pp}{_{>0}}
\newcommand{\m}{^{-1}}
\newcommand{\md}{^{-2}}
\newcommand{\Fcal}{\mathcal{F}}
\newcommand{\Hcal}{\mathcal{H}}
\newcommand{\Scal}{\mathcal{S}}
\newcommand{\abs}[1]{\lvert#1\rvert}
\newcommand{\newword}[1]{\textsl{#1}}
\newcommand{\bb}[1]{\mathbf{#1}}
\newcommand{\set}[1]{\{ #1 \}}
\newcommand{\cvect}[2]{\bigl(\begin{smallmatrix}#1\\#2\end{smallmatrix}\bigr)}
\newcommand{\rvect}[2]{(#1\;\;#2)}
\newcommand{\smatrix}[4]{\bigl(\begin{smallmatrix}#1&#2\\#3&#4\end{smallmatrix}\bigr)}
\DeclareMathSymbol{\upharpoonright}{\mathrel}{AMSa}{"16}
\let\restriction\upharpoonright
\DeclareMathOperator{\den}{den}
\DeclareMathOperator{\PSL}{PSL}
\DeclareMathOperator{\nngg}{ng}
\DeclareMathOperator{\Cone}{Cone}
\DeclareMathOperator{\id}{id}
\DeclareMathOperator{\re}{re}
\DeclareMathOperator{\im}{im}
\DeclareMathOperator{\area}{area}
\theoremstyle{plain}
\newtheorem{theorem}{Theorem}[section]
\newtheorem{lemma}[theorem]{Lemma}
\theoremstyle{definition}
\newtheorem{definition}[theorem]{Definition}
\newtheorem{example}[theorem]{Example}
\begin{document}

\bibliographystyle{plain}

\sloppy

\title[The weighted Farey sequence]{The weighted Farey sequence\\
and a sliding section for the horocycle flow}

\author[]{Giovanni Panti}
\address{Department of Mathematics\\
University of Udine\\
via delle Scienze 206\\
33100 Udine, Italy}
\email{giovanni.panti@uniud.it}

\begin{abstract}
The Farey sequence is the sequence of all rational numbers in the real unit interval, stratified by increasing denominators. A classical result by Hall says that its normalized gap distribution is the same as the distribution of the random variable $\bigl(2\,\zeta(2)\,xy\bigr)\m$ on a certain unit triangle. In this paper we weight the  denominators by an arbitrary piecewise-smooth continuous function, and we characterize the resulting gap distribution as that of a multiple of the above variable, defined on a certain unit pentagon. Our characterization refines previous results by  
Boca, Cobeli and Zaharescu, but employs completely different techniques. Building upon recent work by Athreya and Cheung,
we construct a varying-with-time Poincar\'e section for the horocycle flow on the space of unimodular lattices, and we interpret the weighted Farey sequence as the list of return times to the section. Under an appropriate parametrization, our pentagon appears as the orbit of Hall's triangle under the motion of the section, and basic equidistribution results for long closed horocycles yield explicit formulas for the limit transverse measure.
\end{abstract}

\thanks{\emph{2010 Math.~Subj.~Class.}: 37D40; 11B57.}

\maketitle

\section{Introduction}\label{ref3}

For every $Q=1,2,3,\ldots$, let
$\Fcal(Q)=\set{0=s_0<s_1<s_2<\cdots<s_{n(Q)-1}}$ be a finite subset of the half-open real unit interval $\ooi$. Assume that $\Fcal(1)\subset\Fcal(2)\subset\cdots$, with union dense in $\ooi$. Setting $s_{n(Q)}=1$, the \newword{normalized gap} at $s_i\in\Fcal(Q)$ is
\[
\nngg_Q(s_i)=n(Q)(s_{i+1}-s_i).
\]
If, for every $z\in\Rbb\p$, the limit
\begin{equation}\label{eq1}
H(z)=\lim_{Q\to\infty}\frac{\sharp\set{0\le i<n(Q):\nngg_Q(s_i)\le z}}{n(Q)}
\end{equation}
exists, then we say that the sequence of the $\Fcal(Q)$'s has \newword{cumulative gap distribution}~$H$.

Two extreme cases of this setting are the Heaviside distribution (the distribution of a random variable which is $1$ almost surely), which is easily realizable via ``evenly spaced'' $\Fcal(Q)$'s, and the exponential distribution $1-\exp(-z)$, which is almost surely induced whenever the points of $\Fcal(Q)$ are given by i.i.d.~random variables uniformly distributed on~$\ooi$.

Throughout this paper rational numbers $s=p/q$ are always written in reduced form (i.e., $q>0$ and $p,q$ relatively prime). The \newword{Farey sequence of order $Q$} is the set $\Fcal_\one(Q)$ of all rational numbers in $\ooi$ whose denominator is $\le Q$ (we'll explain the subscript $\one$ shortly). All intervals $[p_i/q_i,p_{i+1}/q_{i+1}]$ between consecutive points of $\Fcal_\one(Q)\cup\set{1}$ are \newword{unimodular} (i.e., $\det\smatrix{p_{i+1}}{p_i}{q_{i+1}}{q_i}=1$), and hence have length $(q_{i+1}q_i)\m$, which is bounded from below by $Q^{-2}$. It is a classical fact that the number of intervals in $\Fcal_\one(Q)$ is asymptotic, for $Q\to\infty$, to $\bigl(2\,\zeta(2)\bigr)^{-1}Q^2$.
This immediately implies that the normalized gaps are bounded from below by $\bigl(2\,\zeta(2)\bigr)\m=3/\pi^2=0.30396\ldots$, so that $H(z)$ exists and has value $0$ for $z\le 3/\pi^2$. This remark is just a fraction of Hall's classical result~\cite{hall70}, according to which the limit~\eqref{eq1} exists for the Farey sequence and agrees with the cumulative distribution $H_\one(z)$ of the random variable
\begin{equation*}
Z_1(x,y)=\frac{1}{2\,\zeta(2)\,xy}.
\end{equation*}
The latter is defined on the space $(\Omega_1,P_1)$, where
$\Omega_1$ is the triangle $\set{(x,y)\in\Rbb\pp^2:x,y\le 1<x+y}$ and $P_1$ is the Lebesgue measure, normalized by $P_1(\Omega_1)=1$. 

Explicit computation gives
\begin{equation*}
H_\one(z)=\begin{cases}
0, & \text{if $z\le 3/\pi^2$;}\\
2-6\bigl(1+\log(\pi^2z/3)\bigr)/(\pi^2z),
& \text{if $3/\pi^2<z\le 12/\pi^2$;} \\
2-6/(\pi^2z)-\sqrt{1-12/(\pi^2z)}+ \\
\quad 12\log(1/2+\sqrt{1/4-3/(\pi^2z)})/(\pi^2z),
& \text{if $12/\pi^2 < z$.}
\end{cases}
\end{equation*}
Differentiating, we obtain the density distribution function
\begin{equation*}
h_\one(z)=\begin{cases}
0, & \text{if $z\le 3/\pi^2$;}\\
6\log(\pi^2z/3)/(\pi^2z^2),
& \text{if $3/\pi^2<z\le 12/\pi^2$;} \\
-12\log\bigl(1/2+\sqrt{1/4-3/(\pi^2z)}\bigr)/(\pi^2z^2),
& \text{if $12/\pi^2 < z$.}
\end{cases}
\end{equation*}

We plot $h_\one(z)$ in Figure~\ref{ref1}; the two points of nondifferentiability correspond to the hyperbola $\set{Z_1(x,y)=z}$ hitting~$\Omega_1$ in the upper right corner (at $z=3/\pi^2$) and in the midpoint of the hypothenuse (at $z=12/\pi^2$).
\begin{figure}[h!]
\includegraphics[width=8cm,height=5cm]{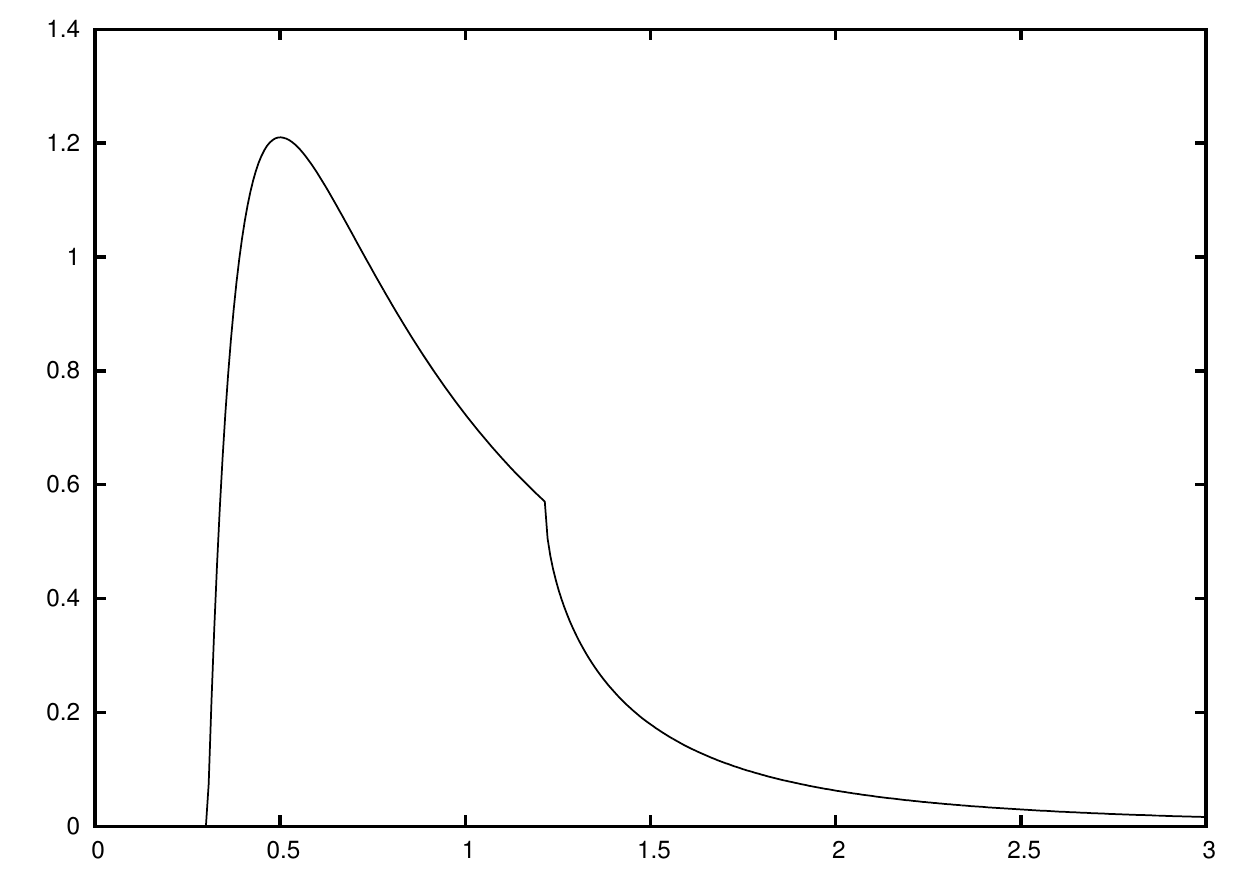}
\caption{}
\label{ref1}
\end{figure}

In this paper we are interested in the statistics of gap distribution when the denominators of rational points are weighted by a fixed function $u:\ooii\to\Rbb\pp$ such that:
\begin{itemize}
\item[(i)] $u$ is continuous;
\item[(ii)] $\ooii$ can be partitioned into finitely many subintervals $[s,s']$ (with $s,s'$ not necessarily rational), overlapping only at endpoints, and such that $u$ is $C^1$ with bounded first derivative on each $(s,s')$.
\end{itemize}
We call such a function a \newword{unit} (the name stems from algebraic logic~\cite{pantiravotti}). The \newword{$u$-denominator} of $s=p/q\in\Qbb\cap \ooii$ is then $\den_u(s)=u(s)\,q$; the constant function~$\one$ is a unit, and $\den_\one(s)$ is the ordinary denominator of $s$. We can then form the \newword{$u$-weighted Farey sequence} $\Fcal_u(Q)$, whose elements are all the rational points in $\ooi$ of $u$-denominator $\le Q$.

We can look at things projectively, by defining the \newword{cone} over $\ooi$ by $\Cone\ooi=\Rbb\pp\cdot\bigl\{\cvect{s}{1}:s\in \ooi\bigr\}\subset\Rbb^2$. Every function $f$ on $\ooi$ gives rise to its \newword{homogeneous correspondent} $\bb{f}:\Cone\ooi\to\Rbb$ by $\bb{f}(\bb{s})=
y\, f(x/y)$, where $\bb{s}=\cvect{x}{y}$.
By defining the homogeneous correspondent of the rational point $s=p/q$ to be $\bb{s}=\cvect{p}{q}$ (we are trying to use consistently boldface type for projective objects, and lightface for affine ones), we immediately see that $\den_u(s)=\bb{u}(\bb{s})$, and the points in $\Fcal_u(Q)$ correspond bijectively to the primitive integer points in $\Cone\ooi\cap\set{\bb{u}\le Q}$.

Summing up, we are looking at the limiting gap distribution ---call it $H_u$--- of the projections of the above primitive integer points on $\ooi\times\set{1}$.
An explicit expression for $H_u$ as a mean over the Hall distribution $H_\one$ is obtained by Boca, Cobeli and Zaharescu in~\cite[Theorem~0.2]{boca_et_al00}; up to changes in parametrization and notation, it is formula~\eqref{eq2} below. Their proof uses incomplete Kloosterman sums and analytic number theory, spanning several pages of delicate computation. Building upon work of Athreya and Cheung~\cite{athreyache14}, we exploit here the properties of the horocycle flow on the space of rank-2 unimodular lattices to provide a short and reasonably self-contained proof of~\eqref{eq2}. Our key technical tool, and the main novelty of this paper, is the use of a ``sliding Poincar\'e section'' for the flow (see \S\ref{ref21}).

We remark that the applicability of dynamical equidistribution to the statistics of primitive lattice points inside an arbitrary star-shaped domain ---even in a higher dimensional setting--- was already pointed out by Marklof in~\cite{marklof13} (paragraph starting at the bottom of p.~50). Although in dimension greater than $1$ the resulting limit distributions are not as visualizable as those in the classical case, these techniques grant the transfer of much information; see~\cite{marklofstr10}, \cite{marklofstr11}, \cite{strombergsson11}. The crux of the matter lies ultimately in a very general result on the equidistribution of Farey sequences on large closed horospheres~\cite[Theorem~6]{marklof10}, in which the test function has an explicit dependence on the sequence points.

Let us state our results; throughout this paper $u$ is a fixed unit. We set
\begin{align*}
v(s)&=u(s)\m,\\
C&=\int_0^1 v(s)^2\ud s \in\Rbb\pp,\\
m(x)&=C\m v(s)^2.
\end{align*}
The explicit expression for $H_u$, to be proved in Theorem~\ref{ref20}, is then
\begin{equation}\label{eq2}
H_u(z)=\int_0^1 H_\one\bigl(m(s)\, z\bigr)\, m(s)\ud s.
\end{equation}
Differentiating under the integral sign is safe here, and we obtain
\begin{equation}\label{eq3}
h_u(z)=\int_0^1 h_\one\bigl(m(s)\, z\bigr)\, m(s)^2\ud s.
\end{equation}

\begin{example}
Let\label{ref5} $u$ be the unit
\[
u(x)=\begin{cases}
(5x+1)/2, &\text{if $0\le x\le 1/5$};\\
(x-2/5)^2+24/25, &\text{otherwise}.
\end{cases}
\]
In Figure~\ref{ref2} we plot the graph of $u$, as well as the histogram of the gap distribution of $\Fcal_u(400)$ against the expression for $h_u(z)$ in~\eqref{eq3}.
\begin{figure}[h!]
\includegraphics[height=4.5cm]{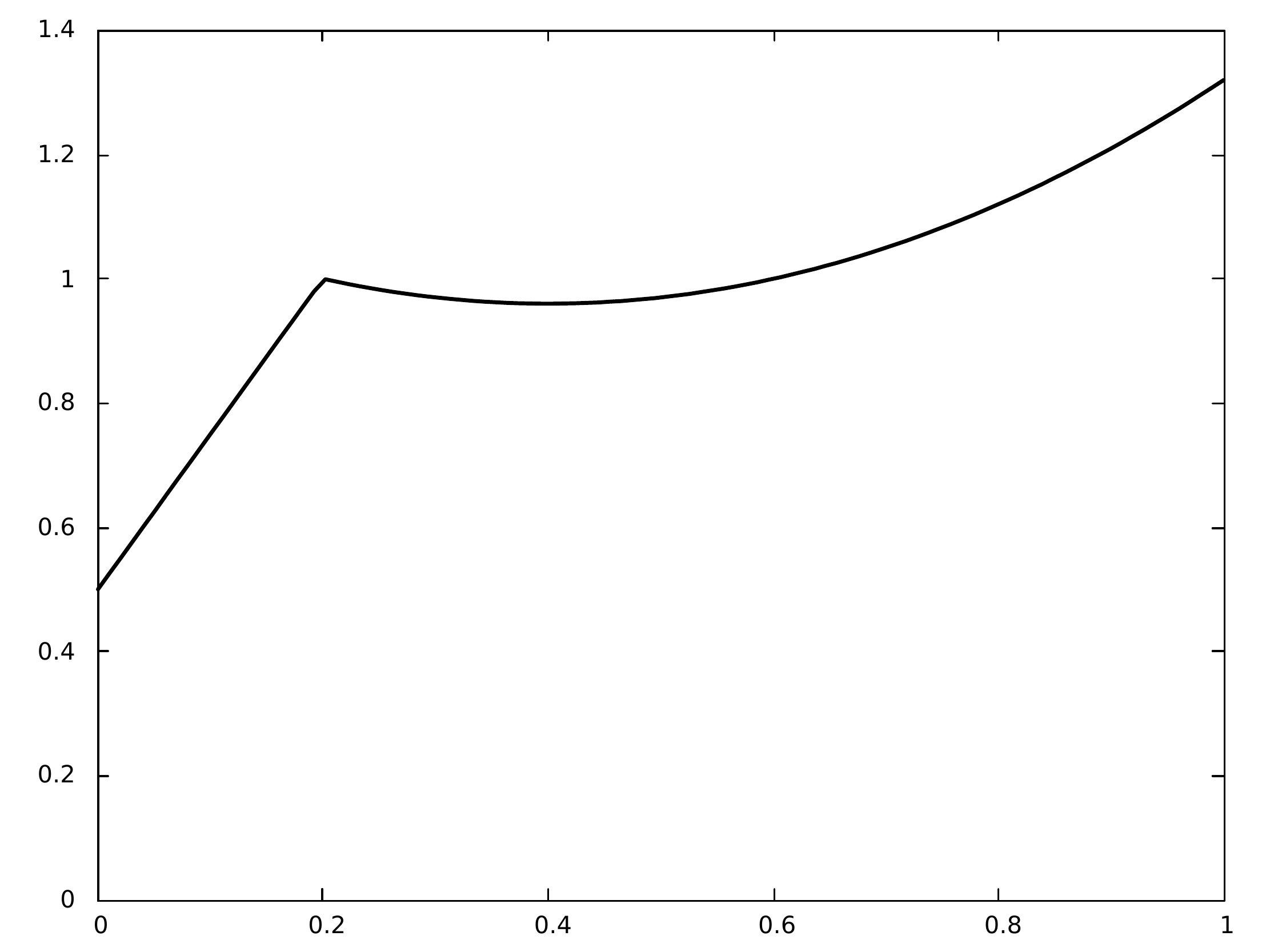}
\includegraphics[height=4.5cm]{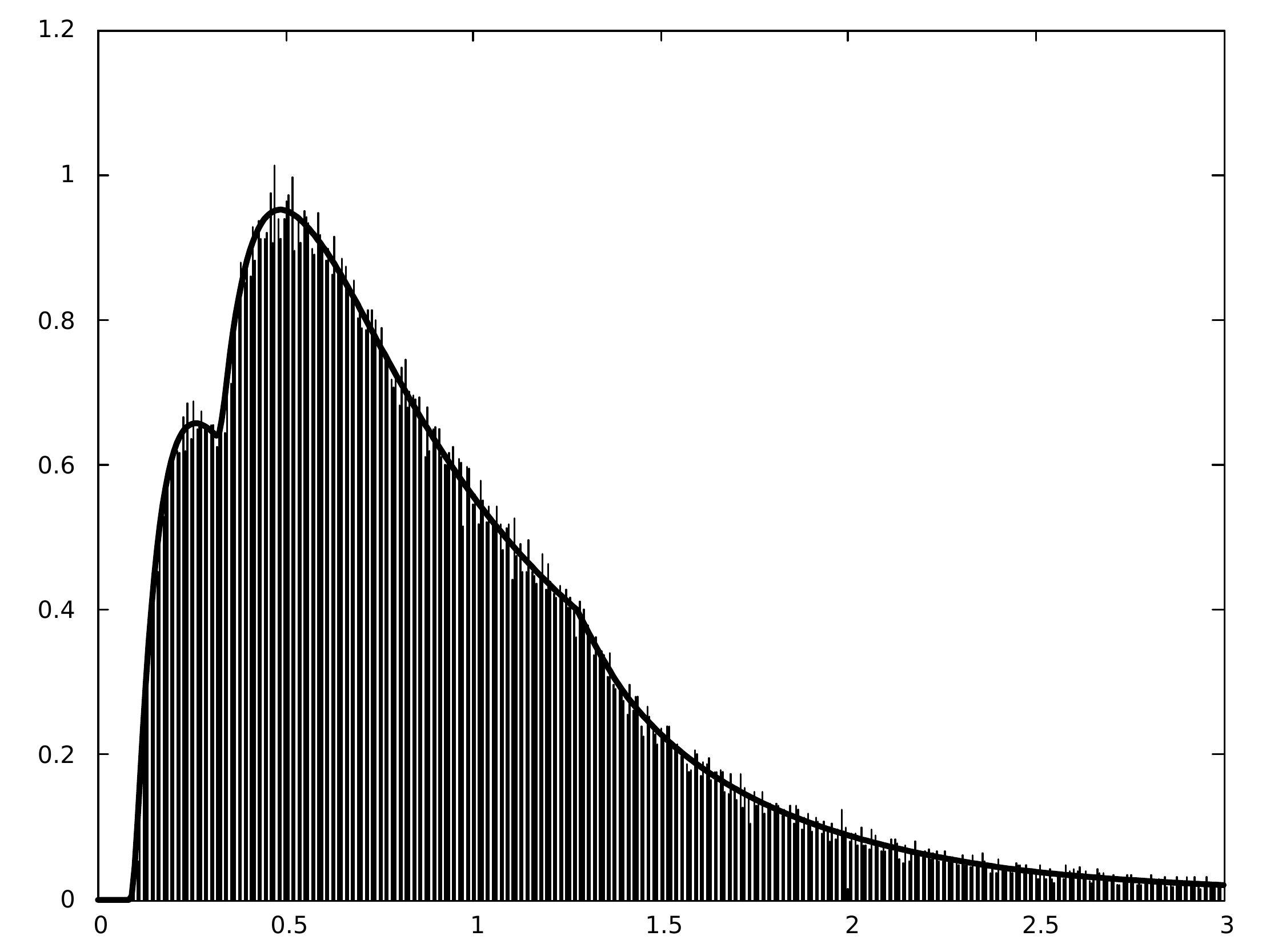}
\caption{}
\label{ref2}
\end{figure}
\end{example}

In Theorem~\ref{ref20} we will express $H_u$ as the cumulative distribution function of the random variable $Z(x,y)=C/(2\,\zeta(2)\,xy)$, with the Hall triangle $\Omega_1$ replaced by a pentagon endowed with an appropriate probability measure; see Figure~\ref{ref29} for the case of the unit of Example~\ref{ref5}. As a consequence, we will show in Theorem~\ref{ref30} that $h_u$ is piecewise-smooth, with finitely many points of nondifferentiability which can be explicitly determined. In the case of the above unit, $C=1.14002\ldots$ and the points of nondifferentiability are
\[
\frac{3C}{\pi^2}\biggl\{\frac{1}{4},\frac{576}{625},1,\frac{1089}{625},
\frac{2304}{625},4,\frac{4356}{625}\biggr\}.
\]
The first, second, and fifth of these points (at $0.08663\ldots$, $0.31935\ldots$ and $1.27743\ldots$, respectively) 
are clearly visible in Figure~\ref{ref2}, while the others are quite hidden.

It is a pleasure to thank Jayadev Athreya for introducing me to the study of gap distribution via homogeneous dynamics, and for many clarifying and stimulating discussions on these topics.

\section{Basics}\label{ref24}

We first prove a fact which is interesting in its own right.

\begin{theorem}
There\label{ref36} exists $Q'$ such that, for every $Q\ge Q'$, all intervals $[p/q,p'/q']$ between successive elements of $\Fcal_u(Q)\cup\set{1}$ are unimodular.
\end{theorem}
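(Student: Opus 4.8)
The plan is to compare the $u$-weighted Farey sequence with an ordinary (unweighted) Farey sequence of suitably large order. Fix a consecutive pair $p/q < p'/q'$ in $\Fcal_u(Q)\cup\{1\}$; since $u$ is bounded below by some $c>0$ and above by some $C'$ on $\ooii$ (by continuity on a compact interval), membership $\den_u(s)\le Q$ forces $q\le Q/c$, so both $p/q$ and $p'/q'$ lie in the ordinary Farey sequence $\Fcal_\one(\lfloor Q/c\rfloor)$. In that sequence there is a finite chain of consecutive unimodular intervals joining $p/q$ to $p'/q'$; our task is to show that, once $Q$ is large, this chain has length one, i.e. no intermediate ordinary Farey fraction $a/b$ with $p/q<a/b<p'/q'$ survives the $u$-weighting, equivalently $\den_u(a/b)=u(a/b)\,b>Q$ for every such $a/b$.

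First I would recall the standard fact about mediants: if $p/q<a/b<p'/q'$ are such that $[p/q,a/b]$ and $[a/b,p'/q']$ are unimodular, then $a/b=(p+p')/(q+q')$ and in particular $b\ge q+q'> \max(q,q')$; more generally any fraction strictly between two consecutive $\Fcal_\one(N)$-fractions has denominator $>N$, so every intermediate $a/b$ above has $b>\lfloor Q/c\rfloor\ge Q/c - 1$. Hence $\den_u(a/b)=u(a/b)\,b \ge c\,b > c(Q/c-1)=Q-c$, which is \emph{almost} but not quite $>Q$. To close this gap I would sharpen the denominator bound on the mediant: among all fractions strictly between $p/q$ and $p'/q'$, the mediant has the \emph{smallest} denominator, namely exactly $q+q'$, and since $[p/q,p'/q']$ has length $(qq')\m$ while both $p/q,p'/q'\in\Fcal_u(Q)$ have $q,q'\le Q/c$, one gets a two-sided control; the point is that an intermediate fraction of small denominator can only occur when $q,q'$ are themselves comparable to $Q$, and then its denominator $q+q'$ is of order $Q$ too, beating $u(a/b)\,b>Q$ by a definite margin.

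The cleanest route, and the one I would actually carry out, is to argue by contradiction using continuity of $u$ more carefully rather than just the crude bounds $c\le u\le C'$. Suppose for infinitely many $Q$ there is a non-unimodular consecutive pair in $\Fcal_u(Q)\cup\{1\}$; then there are $p/q<a/b<p'/q'$ with $[p/q,a/b],[a/b,p'/q']$ unimodular, $u(p/q)\,q\le Q$, $u(p'/q')\,q'\le Q$, but $u(a/b)\,b>Q$. Unimodularity gives $a=p+p'$, $b=q+q'$ after possibly iterating, but more usefully $pb-aq=\pm1$ etc., so $|a/b-p/q|=(qb)\m$ and $|a/b-p'/q'|=(q'b)\m$, whence $a/b$, $p/q$, $p'/q'$ all converge to a common limit $\sigma\in\ooii$ as $Q\to\infty$ (their mutual distances are $O(1/Q^2)$ once we know $q,q',b$ all grow like $Q$, which follows since $b=q+q'$ and at least one of $q,q'$ is $\ge b/2\gtrsim Q$). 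By continuity $u(p/q),u(p'/q'),u(a/b)$ all converge to $u(\sigma)=:\lambda>0$. Now write $q=\alpha Q$, $q'=\alpha' Q$ up to $o(Q)$ factors (extract a subsequence so these limits exist in $[0,1/\lambda]$); the constraints become $\lambda\alpha\le 1$, $\lambda\alpha'\le1$, $\lambda(\alpha+\alpha')\ge1$, so $\alpha,\alpha'\in[0,1/\lambda]$ with $\alpha+\alpha'\ge1/\lambda$ — perfectly consistent, so this alone is \textbf{not} a contradiction, and here is where the real work lies.

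The main obstacle, therefore, is exactly this: the naive limiting inequalities are satisfiable, so one must use the \emph{finitely many consecutive} fractions of $\Fcal_\one(Q/c)$ between $p/q$ and $p'/q'$ together with the \emph{order-increasing} structure $\Fcal_u(Q)\subset\Fcal_u(Q+1)$ and the $C^1$-with-bounded-derivative hypothesis (ii) on $u$. The right idea, I expect, is to look not at a single pair but at the whole Stern–Brocot subdivision of $[p/q,p'/q']$ inside $\Fcal_\one(N)$ for $N$ slightly larger than $Q/c$: the fraction in this subdivision with smallest $u$-denominator must be the one that should have appeared in $\Fcal_u(Q)$, and a first-order Taylor estimate $u(a/b)=u(p/q)+O(|a/b-p/q|)=u(p/q)+O(1/(qb))$ shows $\den_u(a/b)=u(a/b)b=u(p/q)b+O(1/q)$, which for $b\ge q+q'$ strictly exceeds $u(p/q)q\ge$ the threshold only if ... — i.e. one plays the $+O(1/q)$ error (harmless, $o(1)$) against the definite gain $u(p/q)(b-q)\ge u(p/q)q'\gtrsim$ a positive multiple of $Q$ coming from $q'\ge 1$. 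Wait — $q'\ge1$ only gives a gain $\ge u(p/q)\approx\lambda$, not $\gtrsim Q$; so the honest statement is that the mediant $a/b$ has $\den_u(a/b)\ge \den_u(p/q)+\lambda+o(1)$ roughly, which forces $\den_u(a/b)>Q$ as soon as $\den_u(p/q)$ is within $\lambda/2$ of $Q$ — and if $\den_u(p/q)<Q-\lambda/2$, then $p/q$ has ``room'', meaning one can instead bound things by pushing $Q$ up by a bounded amount. Making this dichotomy precise, with uniform constants supplied by compactness and hypothesis (ii), is the technical heart of the argument, and I would budget most of the proof for it.
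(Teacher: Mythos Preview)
Your proposal has a genuine gap, and you yourself sense it: the limiting inequalities you wrote down ($\lambda\alpha\le1$, $\lambda\alpha'\le1$, $\lambda(\alpha+\alpha')\ge1$) are perfectly consistent, and your subsequent Taylor argument does not escape this. The reason is that you are working with the \emph{wrong} intermediate fraction. You focus on mediant-type fractions $a/b$ with $b\ge q+q'>\max(q,q')$. For such $a/b$ the estimate $\den_u(a/b)=u(p/q)\,b+O(1/q)$ only says $\den_u(a/b)$ exceeds $\den_u(p/q)$ by roughly $u(p/q)\,q'$, which is entirely compatible with $\den_u(a/b)>Q\ge\den_u(p/q)$; no contradiction arises, no matter how large $Q$ is.

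The decisive observation, which the paper supplies via Ford circles (equivalently, via the Stern--Brocot parent), is that one should instead pick an intermediate fraction with \emph{smaller} denominator. Assuming without loss of generality $q\ge q'$ and $q>1$, there is a unique $p''/q''$ with $p/q<p''/q''$, $q''<q$, and $[p/q,p''/q'']$ unimodular (the right parent of $p/q$). Since $p/q$ and $p''/q''$ are consecutive in $\Fcal_\one(q)$, every fraction strictly between them has denominator $>q\ge q'$; hence $p'/q'$ cannot lie in $(p/q,p''/q'')$, and as $p'/q'\ne p''/q''$ (by the non-unimodularity assumption) we conclude $p''/q''\in(p/q,p'/q')$. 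Now the inequalities run the \emph{right} way: $u(p''/q'')\,q''>Q\ge u(p/q)\,q$ together with $q''<q$ and $\lvert p''/q''-p/q\rvert=(qq'')^{-1}$ give
\[
\frac{u(p''/q'')-u(p/q)}{p''/q''-p/q}
> qq''\Bigl(\frac{Q}{q''}-\frac{Q}{q}\Bigr)
= Q(q-q'') \ge Q,
\]
which tends to infinity and contradicts the Lipschitz bound coming from hypothesis~(ii). (The paper runs the same computation with $u^2$ in place of $u$, but the mechanism is identical.) Your dichotomy at the end (``if $\den_u(p/q)$ is within $\lambda/2$ of $Q$ \ldots\ else push $Q$ up by a bounded amount'') does not close because $\den_u(p/q)$ can sit anywhere in $(0,Q]$; the missing ingredient is precisely the existence of an intermediate fraction whose denominator is smaller than~$q$.
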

\begin{proof}
Recall that the \newword{Ford circle} $C_{p/q}$ at the rational number $p/q$ is the circle of radius $1/(2q^2)$, lying in the upper-half plane and tangent to the real axis at $p/q$~\cite{ford38}.
The circles at the points of $\Fcal_u(Q)\cup\set1$ are then precisely those touching the real axis at points $s\in\ooii\cap\Qbb$ and having center on or above the graph of $u(s)^2/(2Q^2)$; see Figure~\ref{ref37} for the case of the unit of Example~\ref{ref5} and $Q=5$.
\begin{figure}[h!]
\includegraphics[height=4.5cm]{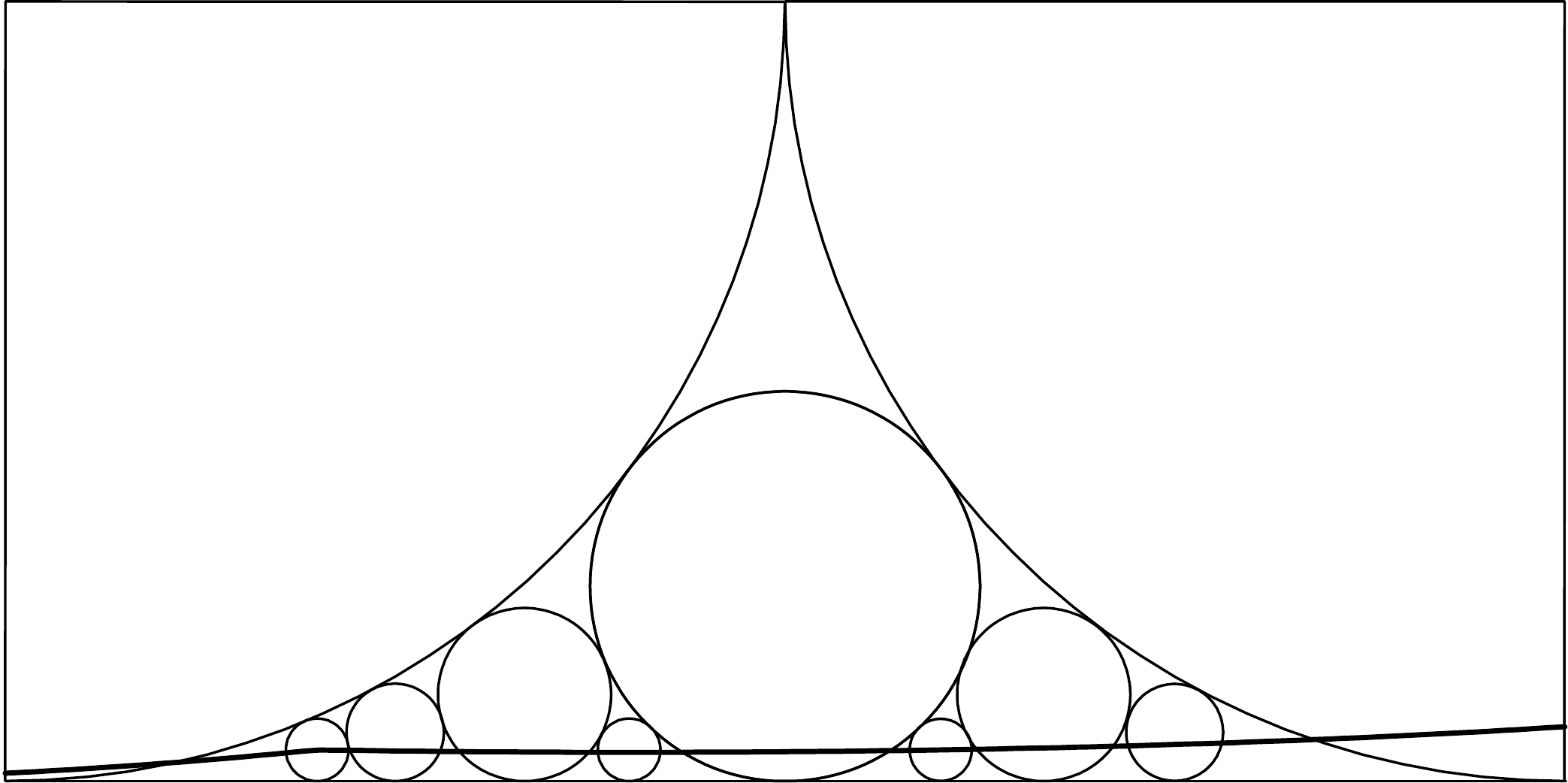}
\caption{}
\label{ref37}
\end{figure}
By the basic properties of the construction, two distinct circles $C_{p/q}$ and $C_{p'/q'}$ are either wholly external to one another, or tangent, and are tangent iff $[p/q,p'/q']$ is unimodular.

Suppose now that the statement of the theorem is false. Then there exists an infinite sequence $Q_0<Q_1<Q_2<\cdots$ such that, for every $i=0,1,2,\ldots$, some interval $[p(i)/q(i),p'(i)/q'(i)]$ between successive elements of $\Fcal_u(Q_i)\cup\set1$ is not unimodular. Without loss of generality  $q(i)\ge q'(i)$ and $q(i)>1$. By~\cite[Theorem~4]{ford38} there exists precisely one fraction $p''/q''$ such that $p(i)/q(i)<p''/q''$, $q(i)>q''$, and $[p(i)/q(i),p''/q'']$ is unimodular. The point $p''/q''$ must necessarily lie between $p(i)/q(i)$ and $p'(i)/q'(i)$, since otherwise $C_{p'(i)/q'(i)}$ and $C_{p''/q''}$ would intersect. Since $p(i)/q(i)$ and $p'(i)/q'(i)$ are consecutive in $\Fcal_u(Q_i)\cup\set1$, the rational $p''/q''$ does not belong to $\Fcal_u(Q_i)\cup\set1$, and therefore
\[
\frac{u(p''/q'')^2}{2Q_i^2}>\frac{1}{2(q'')^2}>
\frac{1}{2(q(i))^2}\ge\frac{u(p(i)/q(i))^2}{2Q_i^2}.
\]
Hence
\begin{multline*}
\frac{u(p''/q'')^2-u(p(i)/q(i))^2}{Q_i^2(p''/q''-p(i)/q(i))}>
\frac{(q'')^{-2}-(q(i))^{-2}}{p''/q''-p(i)/q(i)}=
\frac{q(i)^2-(q'')^2}{q''q(i)}\ge\\
\frac{q(i)^2-(q(i)-1)^2}{(q(i)-1)q(i)}=
\frac{2q(i)-1}{q(i)^2-q(i)}>\frac{1}{q(i)}.
\end{multline*}
As $p(i)/q(i)\in\Fcal_u(Q_i)$, we have $Q_i\ge u(p(i)/q(i))q(i)\ge
\min(u)\,q(i)$, and hence
\[
\frac{u(p''/q'')^2-u(p(i)/q(i))^2}{p''/q''-p(i)/q(i)}>
\frac{Q_i^2\min(u)}{Q_i}=Q_i\min(u).
\]
Now the last term tends to infinity, but this is impossible, since $u$ being a unit immediately implies that the set
\[
\biggl\{\biggl\lvert\frac{u(s)^2-u(s')^2}{s-s'}\biggr\rvert:
s\not= s'\in\ooii\biggr\}
\]
is bounded.
\end{proof}

For the rest of this paper, and without loss of generality, we assume that $Q$ is so large to satisfy the statement of Theorem~\ref{ref36}.

We define
\[
\nabla(1)=\biggl\{
\begin{pmatrix}
x\\ y
\end{pmatrix}
\in\Cone\ooi:
\bb{u}
\begin{pmatrix}
x\\ y
\end{pmatrix}
\le 1\biggr\};
\]
it constitutes a star-shaped sector, bounded by the lines $\set{x=0}$, $\set{x=y}$, and by the curve $\set{v(s)\cvect{s}{1}:0\le s<\le 1}$. 

\begin{lemma}
We\label{ref25} have:
\begin{itemize}
\item[(i)] the area of $\nabla(1)$ is $C/2$;
\item[(ii)] $n(Q)$ is asymptotic to $CQ^2/\bigl(2\,\zeta(2)\bigr)$;
\item[(iii)] as $Q\to\infty$, the probability on $\ooi$
\[
\frac{1}{n(Q)}\sum\set{\delta_s:s\in\Fcal_u(Q)}
\]
(where $\delta_s$ is the Dirac mass at $s$) converges weakly${}^*$ to $m(s)\ud s$.
\end{itemize}
\end{lemma}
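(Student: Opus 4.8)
The plan is to prove the three statements essentially as a chain, deriving (ii) from (i), and (iii) from (ii) together with a subinterval refinement of the same counting argument.

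First I would establish (i) by a direct computation. The sector $\nabla(1)$ is swept out by the radial segments from the origin to the curve $\set{v(s)\cvect{s}{1}:0\le s\le 1}$, so in ``polar-like'' coordinates adapted to the parametrization $s\mapsto v(s)\cvect{s}{1}$ the area is $\frac12\int_0^1 \det\smatrix{s}{1}{s'}{0}\,v(s)^2\,\ud s$ — wait, more carefully: for a curve $\bb r(s)=v(s)\cvect{s}{1}$ bounding a star-shaped region with the origin, the area is $\frac12\int_0^1 \bigl(x(s)y'(s)-y(s)x'(s)\bigr)\ud s$ where $x(s)=sv(s)$, $y(s)=v(s)$. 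Computing $x y' - y x' = sv(s)\cdot v'(s) - v(s)\cdot(v(s)+sv'(s)) = -v(s)^2$, and the two straight edges $\set{x=0}$ and $\set{x=y}$ contribute nothing to the boundary integral, so the area is $\frac12\int_0^1 v(s)^2\ud s = C/2$ after orienting correctly. (One must be a little careful that $v$ is only piecewise $C^1$, but the formula holds on each subinterval and adds up, using continuity of $v$.)

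Next, (ii): $n(Q)$ counts primitive integer points in $\Cone\ooi\cap\set{\bb u\le Q} = Q\cdot\nabla(1)$ (using homogeneity of $\bb u$). The number of primitive lattice points in a dilate $tR$ of a nice region $R$ is asymptotically $\frac{1}{\zeta(2)}\,\area(R)\,t^2$ as $t\to\infty$ — this is the standard primitive-lattice-point count, valid for any Jordan-measurable region with rectifiable boundary, which $\nabla(1)$ is by hypothesis (ii) on $u$. Hence $n(Q)\sim \frac{1}{\zeta(2)}\cdot\frac C2\cdot Q^2 = CQ^2/(2\zeta(2))$. I should note that there is a boundary subtlety: the point $s=1$ and the edge $\set{x=y}$ — but since consecutive Farey fractions are enumerated in $\ooi$ and we adjoin $\set1$ separately, the off-by-one does not affect the asymptotics.

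Finally, (iii): for weak${}^*$ convergence of probability measures on $\ooi$ it suffices to test against a continuous function $f$, or even just to check convergence of $\frac{1}{n(Q)}\sharp\bigl(\Fcal_u(Q)\cap[a,b]\bigr)\to\int_a^b m(s)\ud s$ for all $0\le a\le b\le 1$. The left side counts primitive integer points in $Q\cdot\bigl(\nabla(1)\cap\Cone[a,b]\bigr)$, where $\Cone[a,b]$ is the cone over the subinterval; by the same primitive-lattice-point asymptotic this is $\sim \frac{1}{\zeta(2)}\area\bigl(\nabla(1)\cap\Cone[a,b]\bigr)Q^2$, and the area computation from (i), restricted to $s\in[a,b]$, gives $\frac12\int_a^b v(s)^2\ud s$. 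Dividing by $n(Q)\sim CQ^2/(2\zeta(2))$ yields $C\m\int_a^b v(s)^2\ud s = \int_a^b m(s)\ud s$, as desired. Replacing these cumulative-count convergences by a clean weak${}^*$ statement is routine (e.g. approximate $f$ by step functions).

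The main obstacle is purely one of bookkeeping rather than depth: getting the primitive-lattice-point asymptotic to apply uniformly enough, with explicit control of the boundary contribution, for the family of sub-cones $\Cone[a,b]$. The error term in $\sharp\bigl(tR\cap\Zbb^2_{\mathrm{prim}}\bigr) = \frac{\area(R)}{\zeta(2)}t^2 + O(t\log t)$ depends on the perimeter of $R$; since $u$ is a unit, all the regions $\nabla(1)\cap\Cone[a,b]$ have uniformly bounded perimeter, so the error is $O(Q\log Q)$ uniformly in $a,b$ and is swallowed by the $Q^2$ main term. I would phrase this as: apply the standard count to $\nabla(1)$ and to $\nabla(1)\cap\Cone[a,b]$, cite (for instance) the elementary sieve $\sharp\{\text{primitive points}\} = \sum_{d\ge1}\mu(d)\,\sharp\{\text{lattice points in } (t/d)R\}$ together with the Gauss circle-type estimate for each summand, and observe the piecewise-$C^1$ boundary of $\nabla(1)$ makes all hypotheses legitimate.
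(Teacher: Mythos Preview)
Your proposal is correct and follows essentially the same route as the paper: compute the area of $\nabla(1)$ as $\tfrac12\int_0^1 v(s)^2\ud s$ via the sector/boundary formula, then use the classical primitive-lattice-point asymptotic (the paper cites Hardy--Wright, Theorem~459, rather than spelling out the M\"obius sieve) on $\nabla(1)$ and on each sub-sector over $[a,b)$ to get (ii) and (iii). The only differences are cosmetic---the paper states the sector-area differential $\ud A=\tfrac12 v(s)^2\ud s$ directly instead of via Green's theorem, and it does not discuss the uniformity of the error term, simply taking the ratio of two asymptotics.
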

\begin{proof}
(i) By elementary calculus, the sector swept by the line segment $\set{w\,v(s)\cvect{s}{1}:0<w\le 1}$ in time $\ud s$ has area $\ud A=2\m\,v(s)^2\ud s$; hence
$$
\area\bigl(\nabla(1)\bigr)=\int_0^1 2\m\,v(s)^2\ud s = C/2.
$$

(ii)-(iii) Fix a subinterval $[a,b)$ of $\ooi$. The cardinality of
$\Fcal_u(Q)\cap [a,b)$
is equal to the number of primitive points of the lattice $Q\m\cvect{\Zbb}{\Zbb}$ inside the sector $\set{w\,v(s)\cvect{s}{1}:0<w\le1\text{ and }a\le s< b}$. For $Q\to\infty$, the number of such points is asymptotic to $Q^2$ times the area of the sector divided by $\zeta(2)$~\cite[Theorem~459]{hardywri85}, and it follows that the ratio between the number of primitive points inside the sector and the total number of primitive points in $\nabla(1)$ is asymptotic to the ratio of the relative areas. By the first part of the proof we then have
\[
\lim_{Q\to\infty}\frac{\sharp\bigl(\Fcal_u(Q)\cap [a,b)\bigr)}{n(Q)}=
2/C\int_a^b 2\m v(s)^2\ud s=
\int_a^b m(s)\ud s.
\]
\end{proof}

We recall a few basic facts about the horocycle flow; see~\cite[Chapter~IV]{bekkamayer} or~\cite[Chapter~11]{einsiedlerward} for a full treatment. The group $\PSL_2\Rbb$ acts on the upper halfplane $\Hcal\subset\Cbb$ on the left: if
$A=\smatrix abcd$ and $\alpha\in\Hcal$, then
\[
\begin{pmatrix}
a & b \\
c & d
\end{pmatrix}
\ast\alpha
=\frac{a\alpha+b}{c\alpha+d}.
\]
We identify the unit tangent space at $\alpha$ with $\set{\tau\in\Cbb:\abs{\tau}=\im\alpha}$; the above action extends then to a left action of $\PSL_2\Rbb$ on the unit tangent bundle $T^1\Hcal$ via $A\ast(\alpha,\tau)=(A\ast\alpha,(c\alpha+d)^{-2}\tau)$. This latter action is transitive with trivial stabilizers, so we get a bijection
\[
\PSL_2\Rbb\ni A \mapsto A\ast (i,i)\in T^1\Hcal,
\]
whose inverse is given by the Iwasawa decomposition
\[
\begin{pmatrix}
1 & \re\alpha \\
0 & 1
\end{pmatrix}
\begin{pmatrix}
(\im\alpha)^{1/2} & 0 \\
0 & (\im\alpha)^{-1/2}
\end{pmatrix}
\begin{pmatrix}
\cos\theta & -\sin\theta \\
\sin\theta & \cos\theta
\end{pmatrix}
\leftarrow\!\mapstochar
(\alpha,\tau),
\]
where $\theta=-\arg(\tau/i)/2$.

The above bijection is preserved under the quotient by the left action of the modular group $\Gamma=\PSL_2\Zbb$, so we have an induced bijection
\begin{equation*}
\Phi:\Gamma\bs\PSL_2\Rbb \to T^1M,
\end{equation*}
where $M=\Gamma\bs\Hcal$ is the modular surface. We keep on writing $\xi=(\alpha,\tau)$ for the points of $T^1M$, without explicit mention of the action of~$\Gamma$.

We can extend $\Phi$ to the space $X_2$ of all unimodular lattices in $\Rbb^2$. To this purpose, it is expedient to identify $\Rbb^2$ with $\Cbb$ as real vector spaces, with the standard basis $\rvect{\bb{e}_1}{\bb{e}_2}$ corresponding to $\rvect{i}{1}$ (we always drop reference to $\rvect{\bb{e}_1}{\bb{e}_2}$, by writing $\cvect{x}{y}$ for $\rvect{\bb{e}_1}{\bb{e}_2}\cvect{x}{y}$).
We also set $S=\smatrix{0}{-1}{1}{0}$ and remark that, for $A\in\PSL_2\Rbb$, the matrix $SA\m S\m$ is the transpose $A^T$ of $A$.

\begin{lemma}
Let\label{ref16} $\Xi:\Gamma\bs\PSL_2\Rbb\to X_2$ and $\Psi:T^1M\to X_2$ be defined by
\begin{align*}
\Xi:\Gamma A & \mapsto 
SA\m S\m
\begin{pmatrix}
\Zbb \\
\Zbb
\end{pmatrix}
=A^T
\begin{pmatrix}
\Zbb \\
\Zbb
\end{pmatrix},\\
\Psi:(\alpha,\tau) & \mapsto 
(i/\tau)^{1/2}
\begin{pmatrix}
\alpha & 1
\end{pmatrix}
\begin{pmatrix}
\Zbb \\
\Zbb
\end{pmatrix}.
\end{align*}
Then $\Xi,\Psi$ are well-defined bijections and the square
\[
\begin{xy}
\xymatrix{
\Gamma\bs\PSL_2\Rbb \ar[r]^\Phi \ar[d]_\Xi &
T^1M \ar[d]^\Psi \\
X_2 \ar@{=}[r] & X_2
}
\end{xy}
\]
whose bottom row results from the identification of $\Rbb^2$ with $\Cbb$ as above, commutes.
\end{lemma}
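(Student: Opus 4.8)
The plan is to reduce everything to $2\times2$ matrix bookkeeping, resting on two standard facts: transposition is an anti-automorphism of $\PSL_2\Rbb$ restricting to an automorphism of $\Gamma$, and the orbit map $B\mapsto B\cvect\Zbb\Zbb$ identifies $\PSL_2\Rbb/\Gamma$ with $X_2$ (indeed $-I$ acts trivially on lattices, and $B\cvect\Zbb\Zbb=B'\cvect\Zbb\Zbb$ precisely when $B\m B'\in\Gamma$). First I would dispose of $\Xi$ on its own. Choosing a lift of $A$ to $\SL_2\Rbb$, the lattice $A\tr\cvect\Zbb\Zbb$ is unimodular because $\det A\tr=1$; and since $\gamma\in\Gamma$ forces $\gamma\tr\in\Gamma$, the identity $(\gamma A)\tr=A\tr\gamma\tr$ shows both that $A\tr\cvect\Zbb\Zbb$ is independent of the chosen lift and coset representative and that $\Gamma A\mapsto A\tr\Gamma$ is a well-defined bijection $\Gamma\bs\PSL_2\Rbb\to\PSL_2\Rbb/\Gamma$ (its inverse being $B\Gamma\mapsto\Gamma B\tr$). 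Composing with $\PSL_2\Rbb/\Gamma\cong X_2$ makes $\Xi$ a well-defined bijection; the two formulas $SA\m S\m\cvect\Zbb\Zbb$ and $A\tr\cvect\Zbb\Zbb$ for $\Xi(\Gamma A)$ coincide because $SA\m S\m$ is the transpose of $A$, as recorded just before the statement.

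Next I would check that the square commutes by direct computation. Fix $A=\smatrix abcd$. Then $\Phi(\Gamma A)=A\ast(i,i)=(\alpha,\tau)$ with $\alpha=(ai+b)/(ci+d)$ and $\tau=(ci+d)\md i$, so $i/\tau=(ci+d)^2$ and $(i/\tau)^{1/2}=\pm(ci+d)$. Moreover $\rvect\alpha1=(ci+d)\m\rvect{ai+b}{ci+d}=(ci+d)\m\rvect i1 A\tr$, the last step because multiplying the row $\rvect i1$ on the right by $A\tr=\smatrix acbd$ yields $\rvect{ai+b}{ci+d}$. Hence
\[
\Psi(\Phi(\Gamma A))=\pm(ci+d)(ci+d)\m\rvect i1 A\tr\cvect\Zbb\Zbb=\pm\rvect i1 A\tr\cvect\Zbb\Zbb .
\]
Under the identification $\Rbb^2\cong\Cbb$ in force, multiplying a real column on the left by the row $\rvect i1$ \emph{is} the map $\cvect xy\mapsto xi+y$; thus $\rvect i1 A\tr\cvect\Zbb\Zbb$ is the image in $\Cbb$ of the lattice $A\tr\cvect\Zbb\Zbb=\Xi(\Gamma A)$, the sign $\pm$ being immaterial since a lattice is invariant under negation. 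Therefore $\Psi\circ\Phi=\Xi$.

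Finally, well-definedness and bijectivity of $\Psi$ come essentially for free. The formula defining $\Psi$ already makes sense on all of $T^1\Hcal$; denoting by $\widetilde\Psi$ and $\widetilde\Phi$ the maps before passing to the $\Gamma$-quotient, the computation above (valid verbatim for every $A\in\PSL_2\Rbb$) gives $\widetilde\Psi\circ\widetilde\Phi=\bigl(A\mapsto A\tr\cvect\Zbb\Zbb\bigr)$. The right-hand side takes values in $X_2$ and is left-$\Gamma$-invariant, since $(\gamma A)\tr\cvect\Zbb\Zbb=A\tr\gamma\tr\cvect\Zbb\Zbb=A\tr\cvect\Zbb\Zbb$; as $\widetilde\Phi$ is onto and $\Gamma$-equivariant, $\widetilde\Psi$ takes values in $X_2$ and is constant on $\Gamma$-orbits, hence descends to the stated $\Psi:T^1M\to X_2$. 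The identity $\Psi\circ\Phi=\Xi$ with $\Phi$ and $\Xi$ bijections then forces $\Psi$ to be a bijection too. I expect no genuine obstacle here: the only points needing care are the left/right bookkeeping — the modular group acts on the left on $T^1\Hcal$, yet lattices emerge indexed by \emph{right} cosets, which is exactly why transposes intervene — and the harmlessness of the square-root ambiguity in $(i/\tau)^{1/2}$, which never survives the passage to a lattice.
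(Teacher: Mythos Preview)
Your proposal is correct and follows essentially the same approach as the paper, which merely asserts that the definitions are unambiguous (noting the sign ambiguity in $(i/\tau)^{1/2}$) and that one checks easily the bijectivity and the identity $\Xi\m\Psi\Phi=\id$. You supply exactly those routine checks in full, with the only cosmetic difference that you deduce bijectivity of $\Psi$ from $\Psi\Phi=\Xi$ together with the bijectivity of $\Phi$ and $\Xi$, rather than verifying it independently.
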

\begin{proof}
The definition of $\Xi$ is clearly unambiguous, and so is that of $\Psi$, since $(i/\tau)^{1/2}$ is determined up to sign. One checks easily that $\Xi$ and $\Psi$ are bijections and that $\Xi\m\Psi\Phi=\id$.
\end{proof}

The group $\PSL_2\Rbb$ acts on the right on all spaces in Lemma~\ref{ref16}. Namely, for $A,B,R\in\PSL_2\Rbb$ we have
\begin{align*}
\Gamma A\ast R &= \Gamma AR,\\
\bigl(A\ast(i,i)\bigr)\ast R &= AR\ast(i,i),\\
B\begin{pmatrix}
\Zbb\\
\Zbb
\end{pmatrix}\ast R &= R^T B
\begin{pmatrix}
\Zbb\\
\Zbb
\end{pmatrix},\\
\begin{pmatrix}
i & 1
\end{pmatrix}
B\begin{pmatrix}
\Zbb\\
\Zbb
\end{pmatrix}\ast R &= 
\begin{pmatrix}
i & 1
\end{pmatrix}
R^T B
\begin{pmatrix}
\Zbb\\
\Zbb
\end{pmatrix}.
\end{align*}
By construction, the square in Lemma~\ref{ref16} is equivariant under this action. In particular, the (unstable, time-reversing) horocycle flow $h_t$ is defined on $T^1M$ and~$X_2$ by
\begin{align*}
h_t:A\ast(i,i)&\mapsto A
\begin{pmatrix}
1 & 0\\
-t & 1
\end{pmatrix}
\ast(i,i),\\
h_t:B\begin{pmatrix}
\Zbb\\
\Zbb
\end{pmatrix}
&\mapsto
\begin{pmatrix}
1 & -t\\
0 & 1
\end{pmatrix}
B\begin{pmatrix}
\Zbb\\
\Zbb
\end{pmatrix}.
\end{align*}

\section{A sliding section for the horocycle flow}\label{ref21}

We say that the lattice $\Lambda\in X_2$ \newword{contains a vertical vector} if it contains a vector of the form $\cvect{0}{d}$ for some $d>0$.
The set
\[
\Scal=\set{\Lambda\in X_2:\Lambda\text{ contains a vertical vector}}
\]
is a 2-dimensional immersed submanifold of $X_2$, dense in $X_2$.
We can easily parametrize $\Scal$: indeed, every $\Lambda\in\Scal$ is of the form
\[
\Lambda=\begin{pmatrix}
d\m & 0\\
c & d
\end{pmatrix}
\begin{pmatrix}
\Zbb \\ \Zbb
\end{pmatrix},
\]
with $d>0$ uniquely determined, and $c$ determined up to translation by integer multiples of $d$.
This can be described as follows: let $\Zbb$ act on the real upper halfplane $\Rbb\times\Rbb\pp$ by $k\ast(c,d)=(c+kd,d)$. Then the map
\begin{align*}
\tilde\phi:\Scal&\to \Zbb\backslash(\Rbb\times\Rbb\pp),\\
\Lambda&\mapsto 
\Zbb\ast(c,d),
\end{align*}
is a homeomorphism, so that $\Scal$ is an immersed cylinder.

For every $D>0$, the set $\Scal\restriction D$ of all elements of $\Scal$ that contain a vertical vector of length $\le D$ is a Poincar\'e section for the horocycle flow. Indeed, all elements of $X_2$ ---except the codimension-$1$ set of lattices of the form $\smatrix{a}{b}{0}{a\m}\cvect{\Zbb}{\Zbb}$ for some $a<D\m$--- enter $\Scal\restriction D$ countably many times under the action of the flow, both in the past and in the future.
Up to the bijection $\Phi\circ\Xi\m$ the cylinder
$\Scal$ is identifiable with $\Gamma\backslash\set{(\alpha,(\im\alpha)\,i):\alpha\in\Hcal}$. The section
$\Scal\restriction D$ is then the set of all elements of $T^1M$ that have a lift to $T^1\Hcal$ of the form $\bigl(\alpha,(\im\alpha)\,i\bigr)$ for some $\alpha$ having imaginary part $\ge D^{-2}$.

We now want ---this being the key idea in this paper--- to let $D$ vary with time.

\begin{definition}
Given\label{ref14} $Q$, for $t$ in $[0,Q^2)$ we define $\Scal_t=\Scal\restriction v(Q^{-2}t)$.
Let $\Lambda_0(Q)=\smatrix{Q}{0}{0}{Q\m}\cvect{\Zbb}{\Zbb}$; we safely assume $Q\m\le v(0)$, so that $\Lambda_0(Q)\in\Scal_0$. Writing $\Lambda_t(Q)$ for $h_t\bigl(\Lambda_0(Q)\bigr)$, we say that $\Lambda_0(Q)$ \newword{hits} the sliding section $\Scal_t$ at time~$t$ if $\Lambda_t(Q)\in\Scal_t$.
\end{definition}

For ease of notation, whenever $Q$ is understood we write $\Lambda_t$ for $\Lambda_t(Q)$.

\begin{lemma}
The\label{ref26} $h_t$-orbit of $\Lambda_0$ is periodic of period $Q^2$.
The hitting times $0=t_0<t_1<t_2<\cdots$ are precisely the multiples
$t_i=Q^2s_i$ of the elements $s_i\in\Fcal_u(Q)$.
\end{lemma}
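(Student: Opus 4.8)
The plan is to compute the $h_t$-orbit of $\Lambda_0$ explicitly and read off when it lands in the sliding section. First I would write $\Lambda_t = \smatrix{1}{-t}{0}{1}\smatrix{Q}{0}{0}{Q\m}\cvect{\Zbb}{\Zbb} = \smatrix{Q}{-tQ\m}{0}{Q\m}\cvect{\Zbb}{\Zbb}$. Periodicity is then immediate: $\Lambda_{t+Q^2}$ and $\Lambda_t$ differ by right multiplication by $\smatrix{1}{-Q^2}{0}{1}$, whose transpose lies in $\Gamma$ (equivalently, the two upper-triangular matrices generate the same lattice since their columns differ by an integer combination), so the orbit is periodic of period $Q^2$. (One should note the period is exactly $Q^2$ and not a proper divisor; this follows because no smaller integer translation of the first column by a multiple of the second column returns the lattice, given $Q > 1$.)

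Next I would characterize the hitting times. By definition $\Lambda_t \in \Scal_t$ means $\Lambda_t$ contains a vertical vector $\cvect{0}{e}$ with $0 < e \le v(Q^{-2}t)$. A vector in $\Lambda_t$ has the form $\smatrix{Q}{-tQ\m}{0}{Q\m}\cvect{a}{b} = \cvect{Qa - tQ\m b}{Q\m b}$ for $a,b\in\Zbb$. This is vertical and nonzero precisely when $Qa = tQ\m b$ with $b \ne 0$, i.e. $t = Q^2 a/b$; taking the shortest such vector corresponds to $a/b$ in lowest terms, say $a/b = p/q$ with $q > 0$, giving length $e = q/Q$. So the hitting times in $[0,Q^2)$ are exactly $t = Q^2 p/q$ with $p/q \in \Qbb \cap \ooi$ (the restriction to $[0,1)$ coming from $t\in[0,Q^2)$ together with, for $t=0$, the vector $\cvect{0}{Q\m}$), and the section condition $e \le v(Q^{-2}t)$ reads $q/Q \le v(p/q)$, i.e. $u(p/q)\,q \le Q$, i.e. $\den_u(p/q) \le Q$. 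Thus $\Lambda_t \in \Scal_t$ iff $t = Q^2 s_i$ for some $s_i \in \Fcal_u(Q)$, which is the claim. I would also check $s_0 = 0$ is genuinely a hitting time, consistent with the assumption $Q\m \le v(0)$ in Definition~\ref{ref14}.

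The only subtle point — and the main thing to get right rather than a genuine obstacle — is the bookkeeping at $t=0$ and the reduced-form convention: the lattice $\Lambda_t$ for $t = Q^2 p/q$ may contain several vertical vectors (integer multiples of the primitive one), but the section $\Scal \restriction D$ only requires \emph{some} vertical vector of length $\le D$, so it is the \emph{shortest} vertical vector, of length $q/Q$ with $p/q$ reduced, that controls membership; this is exactly what makes the weighting by $u$, rather than by some other function of $q$, come out. Everything else is the routine matrix computation sketched above, combined with the parametrization of $\Scal$ recorded just before Definition~\ref{ref14}.
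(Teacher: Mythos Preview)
Your argument is correct and is essentially the same as the paper's. The only difference is presentational: where you compute directly that a vector $\smatrix{Q}{-tQ\m}{0}{Q\m}\cvect{a}{b}$ is vertical iff $t=Q^2a/b$, the paper applies $h_t\m$ and phrases the same condition as ``the ray $\Rbb\pp\cvect{t}{1}$ passes through a primitive point of $\Lambda_0$ lying in the stretched sector $\nabla(Q)$,'' the sector encoding the length bound $d\le v(Q^{-2}t)$; unwinding either formulation yields $t=Q^2p/q$ with $u(p/q)\,q\le Q$.
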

\begin{proof}
The first statement is clear.
As remarked in the proof of Lemma~\ref{ref25}, the elements of $\Fcal_u(Q)$ are in 1-1 correspondence
with the primitive points of the lattice $Q\m\cvect{\Zbb}{\Zbb}$ which are inside~$\nabla(1)$.
Let
\[
\nabla(Q)=\biggl\{
\begin{pmatrix}
Q^2x\\
y
\end{pmatrix}:
\begin{pmatrix}
x\\
y
\end{pmatrix}
\in\nabla(1)\biggr\};
\]
then
\[
\frac{p}{q}\leftrightarrow
\begin{pmatrix}
Qp\\ Q\m q
\end{pmatrix}
\]
is a 1-1 correspondence between the points of 
$\Fcal_u(Q)$ and the primitive points of $\Lambda_0$
that are in $\nabla(Q)$.

Now, to say that $\Lambda_t\in\Scal_t$ amounts to saying that the ray $\Rbb\pp\cvect t1$ passes through one of these primitive points. Therefore $t$ is a hitting time iff $Qp=tQ\m q$ (i.e., $t=Q^2p/q$) for some $p/q\in\Fcal_u(Q)$.
\end{proof}

Let $t_i=Q^2s_i=Q^2p_i/q_i$ be a hitting time. We choose a lift $\phi(\Lambda_{t_i})$ of $\tilde\phi(\Lambda_{t_i})=\Zbb\ast(c,d_i)$ to $\Rbb\pp^2$ as follows: of all possible choices for $c$ we pick the largest one --- call it $c_i$---
such that
\[
0< c \le v\bigl(Q^{-2}(t_i+(cd_i)\m)\bigr),
\]
and set $\phi(\Lambda_{t_i})=(c_i,d_i)$.

\begin{lemma}
We\label{ref22} have
\begin{align*}
(c_i, d_i)&=Q\m(q_{i+1},q_i),\\
t_{i+1}-t_i&=\frac{1}{c_id_i}.
\end{align*}
\end{lemma}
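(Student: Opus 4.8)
The plan is to compute the lattice $\Lambda_{t_i}$ explicitly, read off $d_i$ and the list of admissible values of $c$, and then convert the inequality defining $c_i$ into a membership condition in $\Fcal_u(Q)$.

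First, since $Q$ is large enough for Theorem~\ref{ref36}, the interval $[p_i/q_i,p_{i+1}/q_{i+1}]$ is unimodular, so $p_{i+1}q_i-p_iq_{i+1}=1$ and $\cvect{Qp_i}{Q\m q_i}$ and $\cvect{Qp_{i+1}}{Q\m q_{i+1}}$ form a basis of $\Lambda_0$. Applying the horocycle matrix $\smatrix{1}{-t_i}{0}{1}$ with $t_i=Q^2p_i/q_i$ and simplifying --- here the unimodularity turns $p_{i+1}q_i-p_iq_{i+1}$ into $1$ --- one finds that $\Lambda_{t_i}$ is generated by the (shortest, upward) vertical vector $\cvect{0}{Q\m q_i}$ together with $\cvect{Qq_i\m}{Q\m q_{i+1}}$. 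Comparing with the parametrization $\Lambda=\smatrix{d\m}{0}{c}{d}\cvect{\Zbb}{\Zbb}$ of $\Scal$, this yields $d_i=Q\m q_i$ at once, and shows that the admissible values of $c$ are exactly $c=Q\m(q_{i+1}+kq_i)$, $k\in\Zbb$.

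The crux is then the identity --- another one-line computation using $p_{i+1}q_i-p_iq_{i+1}=1$ --- that, for such a $c$,
\[
Q^{-2}\bigl(t_i+(c\,d_i)\m\bigr)=\frac{p_{i+1}+kp_i}{q_{i+1}+kq_i},
\]
where the fraction on the right is automatically reduced, its numerator and denominator pairing with $(p_i,q_i)$ to determinant $1$. For $k\ge0$ we have $q_{i+1}+kq_i>0$, and the inequality $0<c\le v\bigl(Q^{-2}(t_i+(c\,d_i)\m)\bigr)$ therefore reads $(q_{i+1}+kq_i)\,u\bigl((p_{i+1}+kp_i)/(q_{i+1}+kq_i)\bigr)\le Q$; since the displayed fraction is reduced with denominator $q_{i+1}+kq_i$, this says precisely that $(p_{i+1}+kp_i)/(q_{i+1}+kq_i)\in\Fcal_u(Q)$ --- or, in the boundary case $i+1=n(Q)$ in which the fraction equals $1$, that $u(1)\le Q$, which holds for $Q$ large.

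Finally, $k\mapsto (p_{i+1}+kp_i)/(q_{i+1}+kq_i)$ is strictly decreasing for $k\ge0$, taking the value $s_{i+1}$ at $k=0$ and tending to $s_i$ as $k\to\infty$; hence for every $k\ge1$ this fraction, which is reduced and lies in $(0,1)$, falls strictly between the consecutive elements $s_i$ and $s_{i+1}$ of $\Fcal_u(Q)\cup\set{1}$, so it is not in $\Fcal_u(Q)$ and the inequality fails. As the inequality does hold at $k=0$, the largest admissible $c$ is $c_i=Q\m q_{i+1}$, which together with $d_i=Q\m q_i$ gives the first formula; the second follows from $t_{i+1}-t_i=Q^2(p_{i+1}/q_{i+1}-p_i/q_i)=Q^2/(q_iq_{i+1})=(c_id_i)\m$. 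The only real obstacle is the bookkeeping --- arranging the list of admissible $c$, the translation of the defining inequality into a $u$-denominator bound, and the monotonicity that forces $k=0$ so that the three dovetail --- together with the minor $s_{i+1}=1$ boundary case.
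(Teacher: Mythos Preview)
Your proof is correct and follows essentially the same route as the paper's: compute $\Lambda_{t_i}$ explicitly from the unimodular basis $\cvect{Qp_{i+1}}{Q\m q_{i+1}},\cvect{Qp_i}{Q\m q_i}$, read off $d_i=Q\m q_i$, and then translate the defining inequality for $c_i$ into a $u$-denominator condition on the mediants $(p_{i+1}+kp_i)/(q_{i+1}+kq_i)$. The paper checks only the single mediant $k=1$ (writing it as $s'=p'/q'$) and $k=0$, whereas you verify that \emph{every} $k\ge1$ fails because the corresponding fraction lies strictly between the consecutive elements $s_i$ and $s_{i+1}$; this is a minor tightening, and you also make the boundary case $s_{i+1}=1$ explicit, but the argument is the same.
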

\begin{proof}
By construction, and according to Lemma~\ref{ref26},
\[
h_{t_i}\m(\Lambda_{t_i})=\Lambda_0=
\begin{pmatrix}
Qp_{i+1} & Qp_i\\
Q\m q_{i+1} & Q\m q_i
\end{pmatrix}
\begin{pmatrix}
\Zbb\\ \Zbb
\end{pmatrix};
\]
this is justified by our standing assumption that $Q$ is so large that all intervals determined by $\Fcal_u(Q)$ are unimodular, as guaranteed by Theorem~\ref{ref36}. We thus get
\[
\Lambda_{t_i}=\begin{pmatrix}
1 & -Q^2p_i/q_i\\
0 & 1
\end{pmatrix}
\begin{pmatrix}
Qp_{i+1} & Qp_i\\
Q\m q_{i+1} & Q\m q_i
\end{pmatrix}
\begin{pmatrix}
\Zbb\\ \Zbb
\end{pmatrix}=
\begin{pmatrix}
Qq_i\m & 0\\
Q\m q_{i+1} & Q\m q_i
\end{pmatrix}
\begin{pmatrix}
\Zbb\\ \Zbb
\end{pmatrix},
\]
so that $d_i=Q\m q_i$.
Let
\[
s'=\frac{p'}{q'}=\frac{p_{i+1}+p_i}{q_{i+1}+q_i}
\]
be the Farey mediant of $s_i$ and $s_{i+1}$. By definition of $\Fcal_u(Q)$, we have $u(s_{i+1})q_{i+1}\le Q$ and $u(s')q'>Q$.
Therefore
\[
0<Q\m q_{i+1}\le
v(s_{i+1})
=v\bigl(Q^{-2}t_i+(q_{i+1}q_i)\m\bigr)=
v\bigl(Q^{-2}(t_i+(Q\m q_{i+1}d_i)\m)\bigr),
\]
while
\begin{multline*}
Q\m q_{i+1}+d_i=Q\m q'>
v(s')
=v\bigl(s_i+(q'q_i)\m\bigr)\\
=v\bigl(Q^{-2}(t_i+(Q\m q' Q\m q_i)\m)\bigr)=
v\bigr(Q^{-2}(t_i+((Q\m q_{i+1}+d_i)d_i)\m)\bigr).
\end{multline*}
Therefore $c_i=Q\m q_{i+1}$ as claimed, and $t_{i+1}-t_i=Q^2(s_{i+1}-s_i)=Q^2(q_{i+1}q_i)\m=(c_id_i)\m$.
\end{proof}

\begin{definition}
We\label{ref27} denote the minimum and maximum of $v$ on $\ooii$
by $l$ and $L$, respectively.
For $w\in[l,L]$ we set:
\begin{align*}
\Omega_w &=\text{the triangle $\set{(x,y)\in\Rbb\pp^2:x,y\le w<x+y}$},\\
P_w&=\text{the Lebesgue measure on $\Omega_w$, normalized by $P_w(\Omega_w)=1$},\\
\Omega&=\textstyle{\bigcup_w}\Omega_w=\text{the pentagon }\set{0<x,y\le L}\cap\set{l<x+y},\\
P&=\text{the probability measure on $\Omega$ defined by }\int_0^1 P_{v(s)}\,m(s)\ud s.
\end{align*}
Given $Q$, we write $\phi(\Lambda_{t_i})=(c_i,d_i)$ for any hitting time $t_i$ of $\Lambda_0$ to the sliding section $\Scal_t$.
Then
\[
P(Q)=\frac{1}{n(Q)}\sum\delta_{(c_i,d_i)}
\]
is a point-process probability measure
on $\Rbb\pp^2$.
\end{definition}

The following is our main theorem.

\begin{theorem}
As\label{ref19} $Q$ goes to infinity, $P(Q)$ converges weakly${}^*$ to $P$.
\end{theorem}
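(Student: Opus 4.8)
The plan is to deduce Theorem~\ref{ref19} from equidistribution of long closed horocycles on $X_2$, using the sliding section $\Scal_t$ as a bookkeeping device. Recall from Lemma~\ref{ref26} that the $h_t$-orbit of $\Lambda_0=\Lambda_0(Q)$ is a closed horocycle of period $Q^2$, and that the hitting times are exactly $t_i=Q^2s_i$ with $s_i\in\Fcal_u(Q)$. By Lemma~\ref{ref22}, at each hitting time the data $(c_i,d_i)$ we record equals $Q^{-1}(q_{i+1},q_i)$, and $t_{i+1}-t_i=(c_id_i)^{-1}$. The key observation is that the point-process measure $P(Q)=n(Q)^{-1}\sum\delta_{(c_i,d_i)}$ can be rewritten as a time average along the closed horocycle of a function that only depends on the current lattice $\Lambda_t$ and on the ``local'' value of $v$ near $t$. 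Precisely, for a lattice $\Lambda\in\Scal$ with $\tilde\phi(\Lambda)=\Zbb\ast(c,d)$, and a parameter $w>0$, one can define a test function $F_w(\Lambda)$ built from the indicator that $\Lambda$ hits the section at level $w$ together with the value of the chosen lift $(c,d)$; summing over the discrete hitting times and passing to the continuous flow via the transversal measure on $\Scal$ turns $\sum_i g(c_i,d_i)$ into $\int_0^{Q^2} \widehat g\bigl(\Lambda_t, v(Q^{-2}t)\bigr)\,\ud t$ for a suitable $\widehat g$ supported near the section.

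First I would make this reduction precise: fix a bounded continuous $g$ on $\Rbb\pp^2$ and write $\int g\,\ud P(Q)=n(Q)^{-1}\sum_i g(c_i,d_i)$. Using Lemma~\ref{ref22} to express $g(c_i,d_i)$ as $(t_{i+1}-t_i)\cdot c_id_i\,g(c_i,d_i)$, I would identify this sum with a Riemann-type sum for an integral of $c_id_i\,g(c_i,d_i)$ against the ``jump'' measure $\sum_i(t_{i+1}-t_i)\delta_{t_i}$, which weak${}^*$-approximates Lebesgue measure $\ud t$ on $[0,Q^2)$ as $Q\to\infty$ because the gaps $t_{i+1}-t_i=Q^2(s_{i+1}-s_i)$ are uniformly bounded above (this boundedness comes, as in Lemma~\ref{ref25} and Theorem~\ref{ref36}, from $v$ being bounded below and the unimodularity of the intervals). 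Second, I would package $(c_i,d_i)$ and the factor $c_id_i$ as the value at $\Lambda_{t_i}$ of a function on the section: writing $\Lambda_{t_i}=\smatrix{d_i^{-1}}{0}{c_i}{d_i}\cvect{\Zbb}{\Zbb}$ and recalling how the lift $c_i$ was selected (the largest $c$ with $0<c\le v(Q^{-2}(t_i+(cd_i)^{-1}))$), I would replace the genuinely $t$-dependent selection rule by one depending only on the frozen value $w=v(Q^{-2}t_i)$; the error introduced is controlled because $v$ is $C^1$ with bounded derivative away from finitely many points, so over a single gap of size $O(Q^{-2})$ in the $s$-variable the relevant value of $v$ moves by $o(1)$ uniformly. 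After this, $\int g\,\ud P(Q)$ becomes, up to $o(1)$, $Q^{-2}\int_0^{Q^2} G\bigl(\Lambda_t,v(Q^{-2}t)\bigr)\,\ud t$ for an explicit bounded function $G(\Lambda,w)$ on $\Scal\times[l,L]$, extended by $0$ off the section.

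Third, I would run the equidistribution step. After the time-rescaling $t=Q^2\sigma$, the integral is $\int_0^1 G\bigl(\Lambda_{Q^2\sigma},v(\sigma)\bigr)\,\ud\sigma$. For each fixed $\sigma$ (equivalently each fixed level $w=v(\sigma)$), classical equidistribution of long closed horocycles on $X_2$ (Sarnak; or the Athreya--Cheung/Marklof framework cited in the introduction) gives that $Q^{-2}\int_0^{Q^2}G(\Lambda_t,w)\,\ud t$ converges to the average of $G(\cdot,w)$ against Haar measure $\mu$ on $X_2$. One then computes this Haar integral: unfolding $\mu$ against the transversal measure on the section $\Scal$ recovers exactly the triangle $\Omega_w$ with its normalized Lebesgue measure $P_w$ — this is the Athreya--Cheung computation that produces Hall's $\Omega_1$ when $w=1$, rescaled to $\Omega_w$ for general $w$ — and the factor coming from $t_{i+1}-t_i=(c_id_i)^{-1}$ is precisely what converts the ``return-time-weighted'' Haar average into an honest average over $\Omega_w$. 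Feeding this back, $\int g\,\ud P(Q)\to\int_0^1\bigl(\int_{\Omega_{v(s)}}g\,\ud P_{v(s)}\bigr)m(s)\,\ud s=\int g\,\ud P$, which is the claim; that the scalar factor $m(s)\,\ud s$ appears (rather than $\ud s$) is forced by Lemma~\ref{ref25}(iii), since the density of hitting times at ``time'' $s$ is $m(s)$.

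The main obstacle I expect is making the freezing argument in the second step fully rigorous and uniform: one must show that replacing the self-referential selection of the lift $c_i$ (which uses $v$ evaluated at $t_i+(c_id_i)^{-1}$, a point that depends on $c_i$ itself) by the frozen value $v(Q^{-2}t_i)$ changes each term by an amount that sums to $o(n(Q))$, and simultaneously that the resulting function $G(\cdot,w)$ is regular enough (bounded, and continuous on the section outside a null set, with the finitely many jump points of $v$ causing only negligible boundary effects) for the horocycle equidistribution theorem to apply with the test function varying in the parameter $w$. Handling the interchange of the $\sigma$-integral with the $Q\to\infty$ limit — i.e.\ upgrading pointwise-in-$\sigma$ equidistribution to the integrated statement — is a dominated-convergence argument once a uniform-in-$w$ rate, or at least uniform boundedness plus equicontinuity, is in hand; the bounded-gap property established via Theorem~\ref{ref36} is what keeps everything uniform.
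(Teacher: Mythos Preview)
Your overall plan --- freeze the level $w$ of the sliding section, invoke equidistribution of the closed horocycle to recover $P_w$ on the Athreya--Cheung triangle $\Omega_w$, then integrate over levels weighted by $m(s)\ud s$ --- is exactly the paper's strategy. But two of your technical steps would fail as written. First, the passage to a continuous time integral: you rewrite $\sum_i g(c_i,d_i)=\sum_i(t_{i+1}-t_i)\,c_id_i\,g(c_i,d_i)$ and then claim this equals $\int_0^{Q^2}G\bigl(\Lambda_t,v(Q^{-2}t)\bigr)\ud t$ for a $G$ ``extended by $0$ off the section.'' But the orbit meets $\Scal$ in a countable set, so that time integral is identically zero; the transverse measure on $\Scal$ is not the flow measure, and the correct object is the discrete hitting measure on the section, which the paper never abandons. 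Second, your dominated-convergence interchange cannot work: for fixed $\sigma$ the point $\Lambda_{Q^2\sigma}$ equidistributes rather than converges, so $G(\Lambda_{Q^2\sigma},v(\sigma))$ has no pointwise limit. One must average over a $\sigma$-interval of positive length \emph{before} letting $Q\to\infty$, and for that one needs equidistribution on a proper segment of the closed horocycle, not on the whole thing --- this is Hejhal's refinement of Sarnak, which your sketch does not invoke and which is the essential analytic input here.

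The paper fills both gaps by discretizing rather than passing to the continuum. One partitions $[0,1)$ into finitely many intervals $I(k,h)$ on which $v$ oscillates by at most $\varepsilon_k$, and on each interval freezes $v$ at a representative value $v(r(k,h))$. A geometric containment lemma makes the freezing quantitative: the genuine lift $\phi(\Lambda_t)$ and the frozen lift $\phi'(\Lambda_t)$ both land in an outer pentagon $\Omega_{I(k,h)}\supset\Omega_{v(r(k,h))}$, and they coincide whenever the frozen lift lands in an inner triangle $\Omega_{I(k,h)}^o\subset\Omega_{v(r(k,h))}$; the discrepancy is therefore controlled by the $P_{v(r(k,h))}$-mass of the collar $\Omega_{v(r(k,h))}\setminus\Omega_{I(k,h)}^o$, which is $O(\varepsilon_k)$ uniformly in $h$. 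Hejhal's theorem is applied on each segment $Q^2I(k,h)$ to make the frozen hitting measure converge to $P_{v(r(k,h))}$, a Riemann sum over $h$ reassembles $\int_0^1[\cdot]\,m(s)\ud s$, and the double limit $Q\to\infty$ then $k\to\infty$ (with a $\limsup/\liminf$ sandwich) replaces your DCT. You correctly flag the freezing as the main obstacle, but the resolution is this finite partition plus the collar estimate and Hejhal's segment equidistribution, not a uniform rate fed into dominated convergence.
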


We will prove Theorem~\ref{ref19} in \S\ref{ref23}. In Figure~\ref{ref29} we plot the support of $P(400)$ for the unit of Example~\ref{ref5}.
\begin{figure}[h!]
\includegraphics[width=8cm]{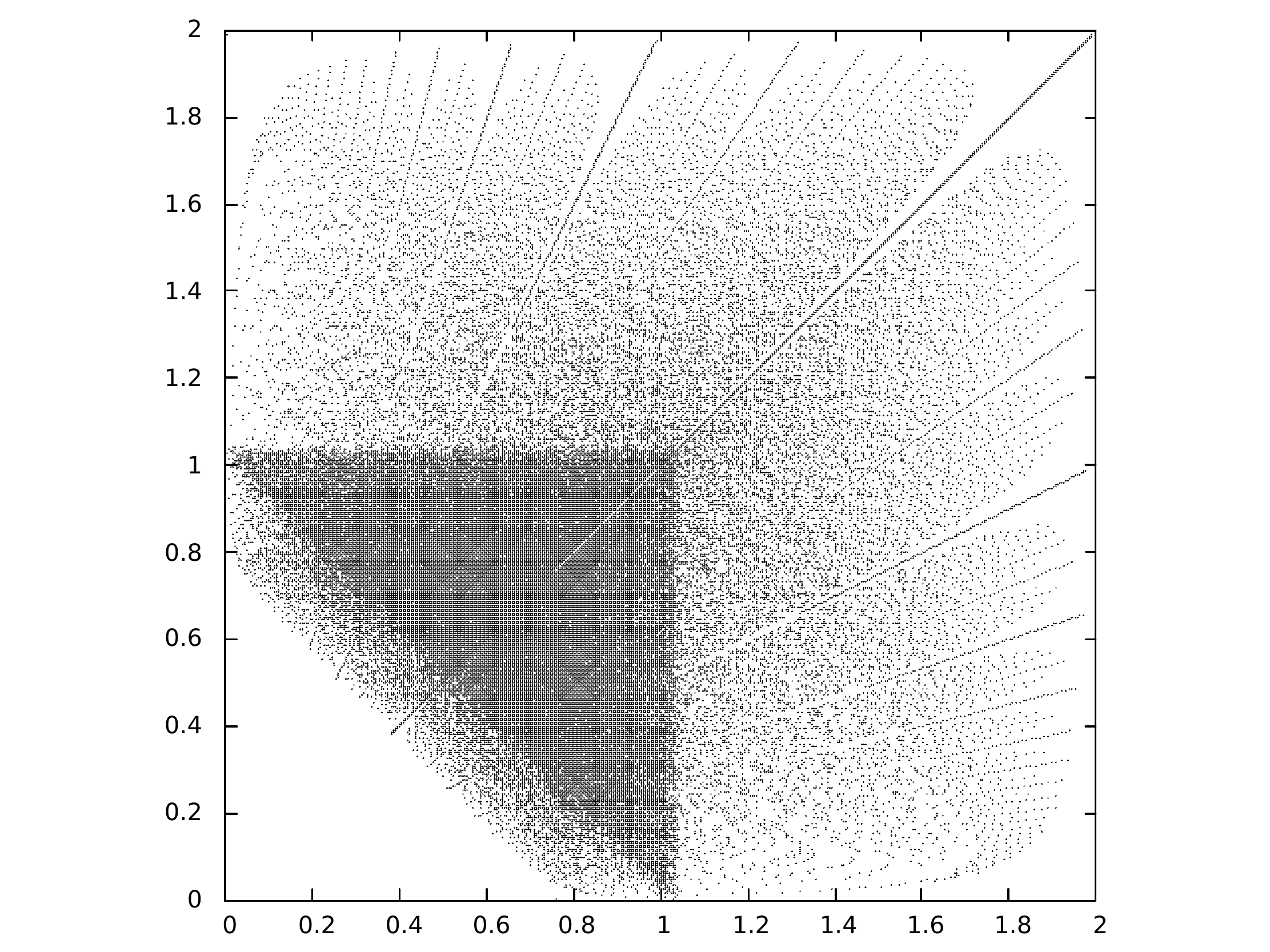}
\caption{}
\label{ref29}
\end{figure}

\begin{theorem}
The\label{ref20} gap distribution $H_u(z)$ of the weighted Farey sequence $\Fcal_u(Q)$
is the cumulative density distribution of the random variable
\begin{align*}
Z:(\Omega,P) &\to\Rbb\pp,\\
(x,y) &\mapsto \frac{C}{2\,\zeta(2)\,xy}.
\end{align*}
Also,
\[
P(Z\le z)=\int_0^1 H_\one\bigl(m(s)\, z\bigr)\, m(s)\ud s,
\]
so formula~\eqref{eq2} holds.
\end{theorem}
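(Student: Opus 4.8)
The plan is to deduce Theorem~\ref{ref20} from Theorem~\ref{ref19} essentially by unwinding definitions and performing a change of variables. First I would observe that by Lemma~\ref{ref26} and Lemma~\ref{ref22}, for every hitting time $t_i=Q^2s_i$ with $s_i\in\Fcal_u(Q)$ we have $\phi(\Lambda_{t_i})=(c_i,d_i)=Q\m(q_{i+1},q_i)$ and the normalized gap is
\[
\nngg_Q(s_i)=n(Q)(s_{i+1}-s_i)=n(Q)Q^{-2}(t_{i+1}-t_i)=\frac{n(Q)}{Q^2}\cdot\frac{1}{c_id_i}.
\]
By Lemma~\ref{ref25}(ii), $n(Q)/Q^2\to C/\bigl(2\,\zeta(2)\bigr)$, so $\nngg_Q(s_i)$ is, up to a factor tending to $C/\bigl(2\,\zeta(2)\bigr)$, equal to $1/(c_id_i)=Z_1(c_i,d_i)$ (the reciprocal, not yet normalized by $\zeta(2)$). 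More precisely, writing $Z(x,y)=C/\bigl(2\,\zeta(2)\,xy\bigr)$ as in the statement, the empirical measure of the values $\nngg_Q(s_i)$ is the pushforward of $P(Q)$ under the map $(x,y)\mapsto \bigl(n(Q)Q^{-2}\cdot 2\,\zeta(2)/C\bigr)\,Z(x,y)$, and the scalar prefactor converges to $1$.

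Next I would invoke Theorem~\ref{ref19}: $P(Q)\xrightarrow{w^*}P$ on $\Rbb\pp^2$. Since $Z$ is continuous on $\Omega\subset\Rbb\pp^2$ and bounded away from the region where it blows up (the support of $P$ lives in the pentagon $\Omega$, and one checks $xy$ is bounded below there because $x+y>l$ forces, together with $x,y\le L$, a positive lower bound on $xy$ — or more carefully, one uses that $Z$ is bounded on $\Omega$ except I should say: $Z$ is continuous and proper enough that weak${}^*$ convergence passes through), the pushforward $(Z)_*P(Q)$ converges weakly${}^*$ to $(Z)_*P$ on $\Rbb\p$. Combining with the vanishing of the scalar correction, the empirical distribution of the normalized gaps converges weakly${}^*$ to the law of $Z$ under $P$. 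Passing from weak${}^*$ convergence of measures to convergence of cumulative distribution functions at every point $z$ is legitimate provided the limiting CDF is continuous at $z$; here $H_\one$ is continuous (indeed piecewise-smooth) so the mixture is too, and this identifies $H_u(z)=P(Z\le z)$, giving the first assertion.

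Finally I would compute $P(Z\le z)$ using the definition $P=\int_0^1 P_{v(s)}\,m(s)\ud s$. For fixed $s$, under $P_{v(s)}$ the pair $(x,y)$ is uniform on the triangle $\Omega_{v(s)}=\{x,y\le v(s)<x+y\}$; rescaling by $x=v(s)x'$, $y=v(s)y'$ sends this to the Hall triangle $\Omega_1$ with its normalized measure $P_1$, and
\[
Z(x,y)=\frac{C}{2\,\zeta(2)\,v(s)^2\,x'y'}=\frac{C}{2\,\zeta(2)\,v(s)^2}\,Z_1(x',y')=\frac{1}{m(s)}\,\frac{1}{2\,\zeta(2)\,x'y'}\cdot\frac{m(s)C}{v(s)^2}=\text{(careful bookkeeping)}.
\]
Using $m(s)=C\m v(s)^2$, one gets $Z(x,y)=Z_1(x',y')/m(s)$, hence $P_{v(s)}(Z\le z)=P_1\bigl(Z_1\le m(s)z\bigr)=H_\one(m(s)z)$. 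Integrating against $m(s)\ud s$ yields $P(Z\le z)=\int_0^1 H_\one(m(s)z)\,m(s)\ud s$, which is exactly \eqref{eq2}. The main obstacle I anticipate is the justification that weak${}^*$ convergence of $P(Q)$ survives the pushforward by $Z$ and the subsequent passage to CDFs: one must check that no mass of $P(Q)$ escapes to the boundary where $Z$ is unbounded (equivalently, that the family $P(Q)$ together with the supports controlled by Lemma~\ref{ref22}, which forces $c_i,d_i$ into a fixed compact range $[l,L]$-ish via the unimodularity and the definition of $\Scal_t$, is uniformly tight away from the blow-up locus), and that the scalar factor $n(Q)/Q^2$ is handled uniformly; both are routine given the earlier lemmas but need a careful word.
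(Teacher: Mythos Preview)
Your proposal is correct and follows essentially the same route as the paper: rewrite $\nngg_Q(s_i)$ via Lemma~\ref{ref22} and Lemma~\ref{ref25}(ii) as (asymptotically) $Z(c_i,d_i)$, invoke Theorem~\ref{ref19} to pass to the limit, and then obtain~\eqref{eq2} by rescaling $\Omega_{v(s)}$ to $\Omega_1$. Your extra care about the Portmanteau step (continuity of the limiting CDF) is appropriate and is glossed over in the paper; note however that your parenthetical claim that $xy$ is bounded below on $\Omega$ is false---what actually matters is that for each fixed $z$ the set $\{Z\le z\}$ has $P$-null boundary, which follows from the absolute continuity of $P$.
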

\begin{proof}
Fix $z>0$. Then
\begin{align*}
&\lim_{Q\to\infty}\frac{1}{n(Q)}
\sharp\set{0\le i<n(Q):\nngg_Q(s_i)\le z}\\
=&
\lim_{Q\to\infty}\frac{1}{n(Q)}
\sharp\set{i:\bigl(2\,\zeta(2)\bigr)\m CQ^2(s_{i+1}-s_i)\le z}&&\text{by Lemma~\ref{ref25}(ii)}\\
=&\lim_{Q\to\infty}\frac{1}{n(Q)}
\sharp\set{i:\bigl(2\,\zeta(2)\bigr)\m C(c_id_i)\m\le z} &&\text{by Lemma~\ref{ref22}}\\
=&\lim_{Q\to\infty}\bigl(P(Q)\bigr)(Z\le z)\\
=&\;P(Z\le z)&&\text{by Theorem~\ref{ref19}}.
\end{align*}

For the second claim we observe that $Z(x,y)\le z$ iff $Z(w\m x,w\m y)\le w^2z$, so that $P_w(Z\le z)=P_1(Z\le w^2z)$ for every $w\in[l,L]$. We then compute
\begin{align*}
P(Z\le z)&=\int_0^1 P_{v(s)}(Z\le z)\,m(s) \ud s\\
&=\int_0^1 P_1(Z\le v(s)^2\,z)\,m(s) \ud s\\
&=\int_0^1 P_1(Z_1\le C\m v(s)^2\,z)\,m(s) \ud s\\
&=\int_0^1 H_\one(m(s)\, z)\, m(s) \ud s.
\end{align*}
\end{proof}

From here on $\lambda$ denotes the $1$-dimensional Lebesgue measure.

\begin{theorem}
(i) 
The\label{ref30} measure $P$ is absolutely continuous w.r.t.~$\ud x\wedge\ud y$. More precisely,
let
\[
p(x,y)=2C\m(v_*\lambda)\bigl([\max(x,y),x+y)\cap[l,L]\bigr),
\]
where $v_*\lambda$ is the pushforward of $\lambda$ by $v$.
Then
\[
\ud P=p(x,y)\ud x\wedge\ud y.
\]

(ii) The probability density function $h_u$ is piecewise-smooth, with finitely many nondifferentiability points. These points are those in the set
\[
\frac{C}{2\,\zeta(2)}\bigl(\set{u(s)^2:s\in E}\cup\set{4u(s)^2:s\in E}\bigr),
\]
where $E\subset\ooii$ contains $0$, $1$, and all points at which $v$ (equivalently, $u$) is nondifferentiable or has a local maximum or minimum.
\end{theorem}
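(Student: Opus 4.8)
Part~(i) will come from unwinding Definition~\ref{ref27}. The region $\Omega_w$ is the square $\set{0<x,y\le w}$ with the triangle $\set{x+y\le w}$ deleted, so $\area(\Omega_w)=w^2/2$ and $P_w=2w\md\,\one_{\Omega_w}\,\ud x\wedge\ud y$. Since $m(s)=C\m v(s)^2$, the factor $v(s)^2$ cancels in $P=\int_0^1 P_{v(s)}\,m(s)\ud s$, leaving $\ud P=\bigl(\tfrac2C\int_0^1\one_{\Omega_{v(s)}}(x,y)\ud s\bigr)\ud x\wedge\ud y$. As $(x,y)\in\Omega_w$ exactly when $\max(x,y)\le w<x+y$, the inner integral is $\lambda\set{s:v(s)\in[\max(x,y),x+y)}=(v_*\lambda)\bigl([\max(x,y),x+y)\cap[l,L]\bigr)$, the intersection with $[l,L]$ being harmless because $v_*\lambda$ is carried there; this is $p(x,y)$, and that $p$ vanishes off $\Omega$ and integrates to $1$ is clear.

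For part~(ii) I would work from~\eqref{eq3}, $h_u(z)=\int_0^1 h_\one(m(s)z)\,m(s)^2\ud s$, exploiting the regularity of Hall's density visible in its explicit formula: $h_\one$ is continuous on $\Rbb$, vanishes on $(-\infty,3/\pi^2]$, is real-analytic on each of $(3/\pi^2,12/\pi^2)$ and $(12/\pi^2,\infty)$, and at $3/\pi^2=\bigl(2\zeta(2)\bigr)\m$ and $12/\pi^2=4\bigl(2\zeta(2)\bigr)\m$ it has, respectively, a corner and a one-sided vertical tangent of square-root type. Hence the integrand $g(s,z)=h_\one(m(s)z)\,m(s)^2$ is continuous, and on the smoothness pieces of $u$ is as regular jointly in $(s,z)$ as $u$ permits, except along the two loci $\set{m(s)z=3/\pi^2}$ and $\set{m(s)z=12/\pi^2}$, which in the $(s,z)$ plane are the graphs $z=\tfrac{C}{2\zeta(2)}u(s)^2$ and $z=\tfrac{C}{2\zeta(2)}4u(s)^2$. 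The plan is to show $h_u$ is smooth at every $z_0$ lying outside $\mathcal E:=\tfrac{C}{2\zeta(2)}\bigl(\set{u(s)^2:s\in E}\cup\set{4u(s)^2:s\in E}\bigr)$ and that $\mathcal E$ is finite; ``smooth'' here means $C^\infty$ when $u$ is piecewise $C^\infty$, and in general that $h_u$ gains two orders of differentiability over $u$.

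Fix $z_0\notin\mathcal E$. The finitely many $s\in\ooii$ lying on the two graphs over $z_0$ have $u(s)^2\in\set{2\zeta(2)z_0/C,\ \zeta(2)z_0/(2C)}$, and since $z_0\notin\mathcal E$ each is an interior point of a smoothness piece of $u$ with $u'\ne0$ there; so near each, by the implicit function theorem, the relevant graph reads $s=\psi(z)$ with $\psi$ as regular as $u$. Cutting $h_u$ with a partition of unity on $\ooii$ subordinate to these points, I would treat separately: (a) the integral over the region staying away from them, where $g(\cdot,z)$ has, for $z$ near $z_0$, its only non-smoothness at the fixed partition points of $u$, so differentiation in $z$ under the integral sign is legitimate and the piece is smooth; and (b) finitely many localized integrals which, after the substitution $w=\tfrac{C}{2\zeta(2)}u(s)^2$ (a local diffeomorphism there, inverse as regular as $u$), become, up to smoother terms, $\int_{w_1}^{w_2}\Phi(w)(z-w)_+\ud w$ (from the corner at $3/\pi^2$) and $\int_{w_1}^{w_2}\Phi(w)\sqrt{(z-w)_+}\ud w$ (from the square-root singularity at $12/\pi^2$, after an evident rescaling), where $\Phi$ is as regular as $u$ and $w_1,w_2$ are fixed with $z_0\notin\set{w_1,w_2}$. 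Both of these are smooth in $z$ near $z_0$, because the singular endpoint $w=z$ carries a vanishing integrand while the genuine endpoints $w_1,w_2$ stay away from $z_0$. For the finiteness of $\mathcal E$: the unit $u$ has finitely many non-smoothness points, $\set{0,1}$ adds two more, and — as in all cases of interest, e.g.\ $u$ piecewise monotone — $u$ has finitely many critical points, so $\set{u(s)^2:s\in E}$ is finite. (Independently, $h_u$ is smooth for all large $z$, since then $m(s)z>12/\pi^2$ for every $s$ and the non-smoothness of $g$ in $s$ stays put.) That the points of $\mathcal E$ are, generically, genuine nondifferentiability points of $h_u$ I would check by tracking the fold, over a local extremum $s_*$ of $u$, in the solution set of $m(s)z=3/\pi^2$ as $z$ crosses $z_0=\tfrac{C}{2\zeta(2)}u(s_*)^2$, which injects a $\sqrt{|z-z_0|}$-term into $h_u$.

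The main obstacle is step~(b): controlling, uniformly for $z$ near $z_0$, the passage of the two singularities of $h_\one$ through the $s$-integral, and in particular checking that the square-root singularity at $12/\pi^2$ is smoothed out rather than producing a fresh corner. The reason the degeneracies of $u$ — its critical points (in particular its local maxima and minima) and its nondifferentiability points — together with the endpoints $0$ and $1$ are exactly the obstructions is that they are the only places where $w=\tfrac{C}{2\zeta(2)}u(s)^2$ fails to be a local diffeomorphism onto an interval avoiding the collision value $w=z$: at a critical point the graph acquires a vertical tangent, so $\set{z=z_0}$ becomes tangent to it and the endpoints $w_1,w_2$ pinch $z_0$; at a nondifferentiability point of $u$ the reparametrization has a corner; and at $s=0$ or $s=1$ the moving endpoint $w=z$ collides with the fixed end of the $w$-integration interval.
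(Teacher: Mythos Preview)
Your argument for part~(i) is correct and coincides with the paper's: both compute $\ud P_w=2w^{-2}\one_{\Omega_w}\ud x\wedge\ud y$, cancel $v(s)^2$ against $m(s)$, apply Fubini, and identify the inner $\ud s$-integral with the pushforward measure.

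For part~(ii) you take a genuinely different route. The paper argues geometrically in the $(x,y)$-plane: the density $p(x,y)$ just computed is smooth except along the boundaries of the finitely many triangles $\Omega_{v(s)}$ with $s\in E$ (these being the places where the sliding triangle starts, stops, reverses, or has a speed jump); since $H_u(z)=\int_{\{Z\le z\}}p\,\ud x\wedge\ud y$, the density $h_u$ is singular precisely when the hyperbola $\{Z=z\}$ becomes tangent to one of these triangle boundaries, which happens at the upper-right corner $(v(s),v(s))$ and at the hypotenuse midpoint $(v(s)/2,v(s)/2)$, giving $z=\tfrac{C}{2\zeta(2)}u(s)^2$ and $z=\tfrac{C}{2\zeta(2)}4u(s)^2$. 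You instead work analytically in the $(s,z)$-plane directly from~\eqref{eq3}, reading off the two singularities of $h_\one$ from its explicit formula and straightening the curves $m(s)z=3/\pi^2$ and $m(s)z=12/\pi^2$ by the local change of variable $w=\tfrac{C}{2\zeta(2)}u(s)^2$. The paper's approach ties in cleanly with the pentagon picture and makes the factors $1$ and $4$ geometrically transparent; your approach is more self-contained (it needs only~\eqref{eq3} and the known $h_\one$) and, once the model integrals $\int\Phi(w)(z-w)_+\ud w$ and $\int\Phi(w)\sqrt{(z-w)_+}\ud w$ are analyzed, gives a more explicit handle on the order of regularity gained. Both proofs are somewhat informal on the converse direction---that every point of $\mathcal E$ is an actual nondifferentiability point rather than a mere candidate---and your remark that a fold at a local extremum injects a $\sqrt{|z-z_0|}$ term is the right mechanism to make this precise.
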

\begin{proof}
(i) By definition, $\ud P_w=2w^{-2}\one_{\Omega_w}(x,y)\ud x\wedge\ud y$. Thus, by Fubini,
\begin{align*}
\int f\ud P &= \int_0^1\biggl[\int f\ud P_{v(s)}\biggr]\,m(s)\ud s\\
&=\int_0^1\biggl[\int f\,2\,v(s)^{-2}\one_{\Omega_{v(s)}}(x,y)
\ud x\wedge\ud y\biggr]\,C\m v(s)^2\ud s\\
&=2C\m \int f\biggl[\int_0^1 \one_{\Omega_{v(s)}}(x,y)\ud s\biggr]\ud x\wedge\ud y\\
&=2C\m \int f\,\lambda\set{s:x,y\le v(s)<x+y}\ud x\wedge\ud y\\
&=2C\m \int f\,\lambda(v\m[\max(x,y),x+y))\ud x\wedge\ud y\\
&=\int f\,p(x,y)\ud x\wedge\ud y.
\end{align*}

(ii) This is best conveyed in geometrical language. As the sliding triangle $\Omega_{v(s)}$ moves through $\Omega$, it deposits mass. By our assumptions about $u$, this process is smooth except at the points $s\in E$, where the triangle starts or stops moving, reverses direction, or changes speed abruptly. We thus get a finite set $\set{\Omega_{v(s)}:s\in E}$ of triangles along whose borders the density $p(x,y)$ is singular. Now, the probability distribution function $h_u$ is singular at $z$ precisely when the hyperbola $\set{Z(x,y)=z}$ ---in its downward movement as $z$ goes from $0$ to infinity--- touches one of these triangles, say $\Omega_{v(s)}$, either in the upper right corner or in the midpoint of the hypothenuse. In the first case we have $Z(v(s),v(s))=z$ (i.e., $z=\bigl(2\,\zeta(2)\bigr)\m Cu(s)^2$), while in the second we have $Z(v(s)/2,v(s)/2)=z$ (i.e., $z=2\,\zeta(2)\m Cu(s)^2$).
\end{proof}

\section{Proof of Theorem~\ref{ref19}}\label{ref23}

Fix a sequence of real numbers $\varepsilon_0,\varepsilon_1,\varepsilon_2,\ldots$, strictly decreasing and converging to $0$. For every $k$, fix $Q_k$ so large that every $v$-image of an interval in $\Fcal_u(Q_k)$ has length $\le\varepsilon_k$. Given $k$ and $0\le h<n(Q_k)$, we let $I(k,h)$ be the $h$-th interval
(closed to the left and open to the right) of the partition of $\ooi$ determined by $\Fcal_u(Q_k)$. We also let $r(k,h)$ be a point in the topological interior of $I(k,h)$ such that
\begin{equation}\label{eq5}
\lim_{Q\to\infty}
\frac{\sharp\bigl(\Fcal_u(Q)\cap I(k,h)\bigr)}{\sharp\Fcal_u(Q)}=
\int_{I(k,h)} m(s)\ud s=
m(r(k,h))\,\lambda(I(k,h));
\end{equation}
such a point exists by Lemma~\ref{ref25}(iii) and the intermediate value theorem for integrals of continuous functions.

For short, we write
\begin{align*}
\Fcal_u(Q,k,h)&=\Fcal_u(Q)\cap I(k,h),\\
\Fcal_{u(r)}(Q,k,h)&=\Fcal_{u(r(k,h))}(Q)\cap I(k,h).
\end{align*}
Note that $u(r(k,h))$ and $v(r(k,h))$ may denote either a number or ---as above--- the constant function whose value is that number; the context always makes the meaning clear.

\begin{lemma}
For\label{ref15} $Q\to\infty$, the cardinalities of $\Fcal_u(Q,k,h)$ and of $\Fcal_{u(r)}(Q,k,h)$ are asymptotically equal.
\end{lemma}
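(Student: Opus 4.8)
The plan is to compare both sets to the count predicted by the area heuristic, exactly as in the proof of Lemma~\ref{ref25}. Fix $k$ and $h$, and write $I=I(k,h)=[a,b)$, $r=r(k,h)$. The key observation is that both $\Fcal_u(Q)\cap I$ and $\Fcal_{u(r)}(Q)\cap I$ count primitive points of the lattice $Q\m\cvect{\Zbb}{\Zbb}$ inside a star-shaped sector over $I$: for the first, the sector bounded by the curve $\set{w\,v(s)\cvect{s}{1}:0<w\le 1,\ a\le s<b}$; for the second, the sector bounded by the \emph{arc} $\set{w\,v(r)\cvect{s}{1}:0<w\le 1,\ a\le s<b}$ (the constant function $v(r)$ traces a circular arc of radius $v(r)$, but what matters is only that it is a star-shaped sector over $I$). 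By \cite[Theorem~459]{hardywri85}, as $Q\to\infty$ the number of primitive points of $Q\m\cvect{\Zbb}{\Zbb}$ inside a fixed Jordan-measurable region is asymptotic to $Q^2/\zeta(2)$ times the area of that region.

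So first I would compute the two areas. The sector for $\Fcal_u(Q)\cap I$ has area $\int_a^b \tfrac12 v(s)^2\ud s$ by the elementary calculus computation recalled in Lemma~\ref{ref25}(i). The sector for $\Fcal_{u(r)}(Q)\cap I$ has area $\int_a^b \tfrac12 v(r)^2\ud s=\tfrac12 v(r)^2(b-a)$. Next I would observe that, by the choice of $r$ in~\eqref{eq5} together with the formula $m(s)=C\m v(s)^2$, we have
\[
\int_a^b v(s)^2\ud s=C\int_{I}m(s)\ud s=C\,m(r)\,\lambda(I)=v(r)^2\,(b-a),
\]
so the two areas are in fact \emph{equal}. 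Therefore both cardinalities are asymptotic to the same quantity $\bigl(Q^2/\zeta(2)\bigr)\cdot\tfrac12 v(r)^2(b-a)$, and hence asymptotically equal to each other.

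I do not expect a serious obstacle here; the only point requiring a little care is the measurability/boundary condition needed to invoke \cite[Theorem~459]{hardywri85} — one must note that the sector boundaries are piecewise-$C^1$ curves (this uses condition~(ii) in the definition of a unit, which was already used in Theorem~\ref{ref36}), hence of zero planar measure, so the lattice-point count is genuinely asymptotic to the area term. A secondary point worth stating explicitly is that the number $n(Q)=\sharp\Fcal_u(Q)$ in the denominators of~\eqref{eq5} cancels out of the comparison: since Lemma~\ref{ref25}(ii) gives $n(Q)\sim CQ^2/(2\zeta(2))$, dividing both asymptotics by $n(Q)$ recovers exactly the common value $\int_I m(s)\ud s$, which is consistent and confirms no arithmetic slip. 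With the two areas shown equal, the conclusion is immediate.
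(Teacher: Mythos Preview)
Your proof is correct and follows essentially the same route as the paper: both cardinalities are asymptotic to $Q^2/\zeta(2)$ times the area of the corresponding sector, and the definition of $r(k,h)$ via~\eqref{eq5} together with $v(s)^2=C\,m(s)$ forces the two areas to coincide. One inconsequential slip: the outer boundary $\set{v(r)\cvect{s}{1}:a\le s<b}$ is a horizontal line segment at height $v(r)$, not a circular arc---but as you yourself note, only the star-shaped property and the area computation matter.
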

\begin{proof}
As in Lemma~\ref{ref25}, the ratio of the two cardinalities is asymptotic to the ratio of the areas of the two sectors
$\set{w\,v(s)\cvect{s}{1}:0<w\le 1\text{ and }s\in\ I(k,h)}$
and $\set{w\,v(r(k,h))\cvect{s}{1}:0<w\le 1\text{ and }s\in\ I(k,h)}$.
The first sector has area
$$
\int_{I(k,h)} 2\m v(s)^2\ud s,
$$
while the second has area
$$
2\m  v(r(k,h))\bigl(\lambda(I(k,h))\,v(r(k,h))\bigr).
$$
Taking into account the definition of $r(k,h)$ and the fact that $v(s)^2=C\,m(s)$, one checks immediately that the two areas agree.
\end{proof}

Let $l(k,h)$, $L(k,h)$ be the infimum and the supremum of $v$ on $I(k,h)$, respectively. Then the triangle
$$
\Omega_{I(k,h)}^o=\set{x,y\le l(k,h)}\cap\set{L(k,h)< x+y}
$$
lies inside $\Omega_{v(r(k,h))}$. In turn, the latter lies inside the pentagon
$$
\Omega_{I(k,h)}=\set{0<x,y\le L(k,h)}\cap\set{l(k,h) < x+y};
$$
see Figure~\ref{ref33}. 
\begin{figure}[h!]
\includegraphics[width=8cm]{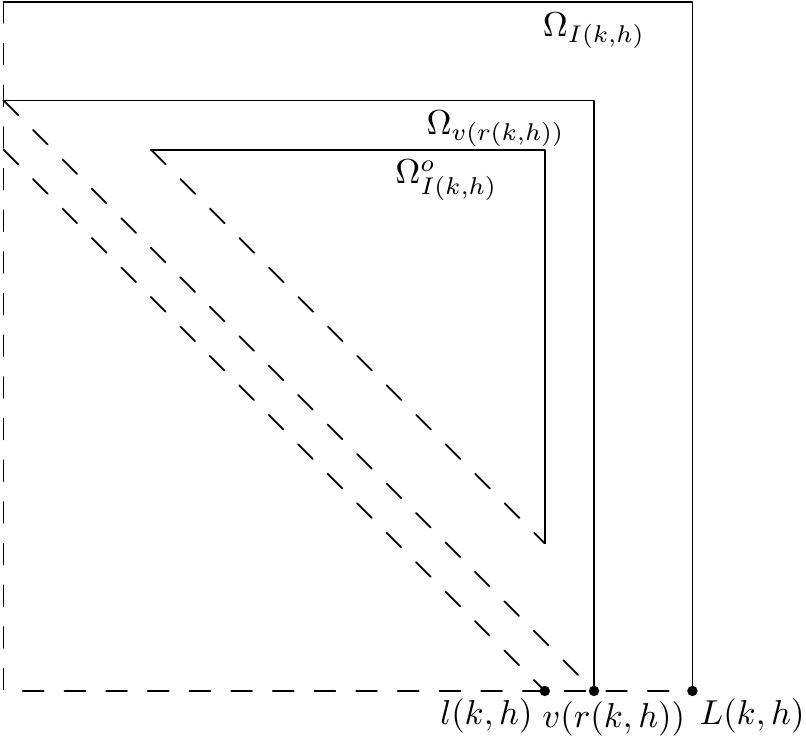}
\caption{}
\label{ref33}
\end{figure}

For $Q\ge Q_k$ and $t$ varying in $Q^2I(k,h)$, we are aiming at freezing the sliding section $\Scal_t$ to the fixed section $\Scal_{Q^2r(k,h)}$. Lemma~\ref{ref15} guarantees that for large $Q$'s this has no impact on the number of hits.
The following Lemma~\ref{ref34} copes with the change of the coordinate function. Indeed, replacing $u$ with the constant function $u(r(k,h))$ we are forced to assign the lattice
\begin{equation}
\label{ref18}
\Lambda_t(Q)=
\begin{pmatrix}
d\m & 0\\
c & d
\end{pmatrix}
\begin{pmatrix}
\Zbb \\ \Zbb
\end{pmatrix}
\end{equation}
(with $t\in Q^2I(k,h)$ and $d\le v(r(k,h))$) the coordinates $\phi'(\Lambda_t(Q))=(c',d)$, where $c'$ is the largest element in $c+d\Zbb$ such that $c'\le v(r(k,h))$; assuming that $t$ is a hitting time for the sliding section as well, the two lifts, $\phi(\Lambda_t)$ and $\phi'(\Lambda_t)$, of $\tilde\phi(\Lambda_t)=\Zbb\ast(c,d)$ may differ in their first component.

\begin{lemma}
Let\label{ref34} $Q\ge Q_k$, $t\in Q^2I(k,h)$, and let $\Lambda_t=\Lambda_t(Q)$ be as in~\eqref{ref18}. Then:
\begin{itemize}
\item[(i)] if $t$ is a hitting time for the sliding section, then $\phi(\Lambda_t)\in\Omega_{I(k,h)}$;
\item[(ii)] if $t$ is a hitting time for the fixed section, then $\phi'(\Lambda_t)\in\Omega_{v(r(k,h))}$;
\item[(iii)] if $\Sigma\ast(c,d)=\Sigma\ast(c',d)$ with $(c',d)\in\Omega_{I(k,h)}^o$, then $t$ is a hitting time for both sections and $\phi(\Lambda_t)=\phi'(\Lambda_t)=(c',d)$.
\end{itemize}
\end{lemma}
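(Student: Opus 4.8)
The plan is to unwind the definitions of the two hitting conditions and compare the two lifts $\phi$ and $\phi'$ directly. Recall that $t\in Q^2I(k,h)$ and $\Lambda_t=\smatrix{d\m}{0}{c}{d}\cvect{\Zbb}{\Zbb}$, with $d\le v(r(k,h))$ (since $t/Q^2\in I(k,h)$ implies $v(Q^{-2}t)\ge l(k,h)$, but we only need the upper bound comparisons below). By definition, $t$ is a hitting time for the sliding section exactly when $\Lambda_t\in\Scal_t=\Scal\restriction v(Q^{-2}t)$, i.e.\ when $\Lambda_t$ contains a vertical vector of length $\le v(Q^{-2}t)$; and it is a hitting time for the fixed section $\Scal_{Q^2r(k,h)}$ when $\Lambda_t$ contains a vertical vector of length $\le v(r(k,h))$. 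The vertical vectors in $\Lambda_t$ are $\pm\cvect{0}{kd}$ for $k\ge 1$, so the shortest one has length $d$; thus the hitting condition in each case reduces to whether the \emph{second coordinate} constraint is met together with the appropriate first-coordinate normalization, exactly mirroring the analysis in Lemma~\ref{ref22}. The key observation is that $d\le l(k,h)$ forces $\Lambda_t$ to have a vertical vector of length $\le l(k,h)\le v(Q^{-2}t)$ and $\le v(r(k,h))$, so the ``second-coordinate'' part of both hitting conditions is automatic; what remains is to check the first coordinate.

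First I would treat (iii), which is the heart of the matter. The hypothesis $\Sigma\ast(c,d)=\Sigma\ast(c',d)$ with $(c',d)\in\Omega_{I(k,h)}^o$ means that, among the representatives $c+d\Zbb$ of $\tilde\phi(\Lambda_t)$, there is one, namely $c'$, with $0<c'\le l(k,h)$ and $L(k,h)<c'+d$; note this last inequality says there is no smaller positive representative in the window, i.e.\ $c'$ is the largest representative $\le L(k,h)$, because the next one down is $c'-d<L(k,h)-d\le \ldots$—wait, more carefully: the representative immediately below $c'$ is $c'-d$, and since $c'\le l(k,h)\le d$... no. Let me instead argue: $c'-d\le l(k,h)-d$. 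Since $(c',d)\in\Omega^o_{I(k,h)}$ we have $c'\le l(k,h)$ and $c'+d>L(k,h)\ge d$, hence $c'>L(k,h)-d\ge 0$, so $c'-d>L(k,h)-2d$; this is not obviously $\le 0$. The clean way is: $c'$ is the largest representative in $(0,l(k,h)]$ iff $c'-d\le 0$ iff $d\ge c'$, which holds because $c'\le l(k,h)\le d$ would need $d\ge l(k,h)$, and indeed $d$ can be close to $l(k,h)$ from above only if... Actually the correct bookkeeping is that $c'$ is the largest representative that is $\le l(k,h)$ precisely when $c'-d\le 0$, and since $c'\le l(k,h)$ while $d$ could be smaller than $l(k,h)$, one must use $c'+d>L(k,h)\ge l(k,h)$, giving $c'>l(k,h)-d$, i.e.\ $c'-d>l(k,h)-2d$; combined with $c'-d < l(k,h)-d < L(k,h)-d$... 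The point I want is simply that $c'$ lies in $(0,\min(l(k,h),v(Q^{-2}t),v(r(k,h)))]$ and that $c'+d$ exceeds all three of these quantities (since each is $\le L(k,h)<c'+d$), so $c'$ is simultaneously the ``largest admissible first coordinate'' for the sliding section and for the fixed section. Therefore $t$ is a hitting time for both, $\phi(\Lambda_t)=(c',d)=\phi'(\Lambda_t)$.

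Next, (i) and (ii) are one-sided versions of the same computation. For (i): if $t$ is a sliding hitting time, then $\phi(\Lambda_t)=(c_i,d)$ where $c_i$ is the largest $c\in c+d\Zbb$ with $0<c\le v\bigl(Q^{-2}(t+(cd)\m)\bigr)$; since $t+(cd)\m$ is the abscissa of the next rational point to the right, it lies in $I(k,h)$ or just past it—but by the choice of $Q_k$, $v$-images of intervals of $\Fcal_u(Q_k)$ have length $\le\varepsilon_k$, and for $Q\ge Q_k$ the relevant gap lies inside $I(k,h)$ up to at most the adjacent interval, so $v\bigl(Q^{-2}(t+(c_id)\m)\bigr)\le L(k,h)$, giving $c_i\le L(k,h)$; also $d\le v(r(k,h))\le L(k,h)$, so $(c_i,d)\in\Omega_{I(k,h)}$ once we check $c_i+d>l(k,h)$, which follows because the \emph{next} representative $c_i+d$ fails the defining inequality, i.e.\ $c_i+d>v\bigl(Q^{-2}(t+((c_i+d)d)\m)\bigr)\ge l(k,h)$. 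Part (ii) is identical with $v(r(k,h))$ in place of the sliding value, giving $\phi'(\Lambda_t)\in\Omega_{v(r(k,h))}$ directly from the defining maximality of $c'$ (upper bound $c'\le v(r(k,h))$ and lower bound $c'+d>v(r(k,h))$).

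The main obstacle is the bookkeeping in (i) that the point $Q^{-2}(t+(c_id)\m)$ —the location of the successor rational— actually stays close enough to $I(k,h)$ that $v$ evaluated there is bounded by $L(k,h)$ rather than by the supremum of $v$ on the adjacent interval. Here one uses that for $Q\ge Q_k$ the partition determined by $\Fcal_u(Q)$ refines that of $\Fcal_u(Q_k)$, so each gap of $\Fcal_u(Q)$ inside $I(k,h)$ has its successor either in $I(k,h)$ or equal to the right endpoint of $I(k,h)$; in the latter borderline case the half-open convention and continuity of $v$ still give the bound $v\le L(k,h)$. Once this is pinned down, all three assertions fall out of the explicit forms of $\phi$ and $\phi'$ together with the inclusions $\Omega_{I(k,h)}^o\subseteq\Omega_{v(r(k,h))}\subseteq\Omega_{I(k,h)}$ noted before the lemma.
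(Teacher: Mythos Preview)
Your treatment of~(i) and~(ii) is essentially correct and matches the paper, though in~(i) the bound $d\le L(k,h)$ should be justified via the sliding hitting condition $d\le v(Q^{-2}t)\le L(k,h)$, not via $d\le v(r(k,h))$ (which is a hypothesis attached to $\phi'$, not to $\phi$). The refinement argument you flag as the ``main obstacle'' is exactly what the paper uses implicitly: since $Q\ge Q_k$ the successor $s_{i+1}$ lies in $\overline{I(k,h)}$, so $v(s_{i+1})\le L(k,h)$, and the mediant $s'$ lies in $I(k,h)$, so $v(s')\ge l(k,h)$.

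Part~(iii), however, has a genuine gap. You attempt to verify directly that $c'$ is the $\phi$-lift by checking $c'\le v(Q^{-2}t)$ and $c'+d>v(Q^{-2}t)$. But the defining condition for $\phi$ is not evaluated at $Q^{-2}t$: it reads $0<c\le v\bigl(Q^{-2}(t+(cd)^{-1})\bigr)$, with $v$ evaluated at the \emph{successor} point associated to that particular representative $c$. From $c'\le l(k,h)$ you cannot conclude $c'\le v\bigl(Q^{-2}(t+(c'd)^{-1})\bigr)$, because you have no control over where $Q^{-2}(t+(c'd)^{-1})$ lands until you already know $c'$ equals $c_i$; and likewise for the failure of $c'+d$. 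So the direct verification is circular.

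The clean way out is the one the paper takes: first observe that $d\le l(k,h)$ makes $t$ a hitting time for both sections, then invoke~(i) and~(ii) to place both lifts $\phi(\Lambda_t)$ and $\phi'(\Lambda_t)$ inside $\Omega_{I(k,h)}$, and finally note that $(c',d)$ is the \emph{unique} representative of $\Zbb\ast(c,d)$ lying in $\Omega_{I(k,h)}$, since $c'+d>L(k,h)$ rules out $(c'+d,d)$ and $(c'-d)+d=c'\le l(k,h)$ rules out $(c'-d,d)$. This uniqueness forces $\phi(\Lambda_t)=\phi'(\Lambda_t)=(c',d)$ without ever needing to test the implicit $\phi$-condition at $c'$.
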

\begin{proof}
(i) Say $Q^{-2}t=s_i\in I(k,h)$, with $s_{i+1}$ the element following $s_i$ in $\Fcal_u(Q)$. By Lemma~\ref{ref22},
$\phi(\Lambda_t)=(Q\m q_{i+1},Q\m q_i)$. Since $Q\m q_i\le v(s_i)$ and $Q\m q_{i+1}\le v(s_{i+1})$, both $Q\m q_i$ and $Q\m q_{i+1}$ are $\le L(k,h)$. On the other hand, as the Farey mediant $s'$ of $s_i$ and $s_{i+1}$ does not belong to $\Fcal_u(Q)$, we have $Q\m q_{i+1}+Q\m q_i=Q\m(q_{i+1}+q_i)>v(s')\ge l(k,h)$.
(ii) follows from (i), applied to the constant unit $u(r(k,h))$.
(iii) Assume $(c',d)\in\Omega^o_{I(k,h)}$. Then $d\le l(k,h)\le v(Q^{-2}t), v(r(k,h))$, so $t$ is a hitting time for both sections. Since $c'+d>L(k,h)$ and $(c'-d)+d=c'\le l(k,h)$, the only lift of $\tilde\phi(\Lambda_t(Q))$ which is contained in $\Omega_{I(k,h)}$ is $(c',d)$. By~(i) and~(ii) both lifts $\phi(\Lambda_t)$ and $\phi'(\Lambda_t)$ must necessarily be equal to $(c',d)$.
\end{proof}

Recall from~\S\ref{ref21} that $\Scal\restriction v(r(k,h))$
is a transverse section for the horocycle flow. The Liouville measure on $X_2$ ---namely, the pushforward via $\Xi$ of the unique $\PSL_2\Rbb$ right invariant measure on $\Gamma\backslash\PSL_2\Rbb$, see the diagram in Lemma~\ref{ref16}--- decomposes locally as a product of a transverse measure for the horocycle foliation (the foliation whose leaves are the $h_t$-orbits) and the linear measure on the leaves.
Let $F$ be the map $\Phi\circ\Xi\m\circ\phi\m$ from $\Omega_{v(r(k,h))}$ to $T^1M$; then
$$
F(x,y)=(xy\m+y\md i,y\md i)=(\xi +\nu i,\nu i).
$$
Write $U$ for the infinitesimal generator of the horocycle flow, and $\omega=\nu\md\ud\xi\wedge\ud\nu\wedge\ud\theta$ for the riemannian volume form.
Then the transverse measure corresponds on 
$\Omega_{v(r(k,h))}$ to the pullback via $F$ of the contraction $\iota_U\omega$ of $\omega$ w.r.t.~the vector field~$U$. A straightforward computation shows now that $F^*(\iota_U\omega)=2\ud x\wedge\ud y$.

A key result by Sarnak~\cite{sarnak81} states that closed horocycles on $X_2$ become equidistributed as their length goes to infinity. This has been refined by Hejhal~\cite{hejhal96}, who shows that equidistribution still holds if the orbits are restricted to a constant fraction of their full length. In our case, this implies that for every pair $(k,h)$ the probability
$$
\frac{1}{\sharp\Fcal_{u(r)}(Q,h,k)}
\sum\set{\delta_{\phi'\Lambda_{Q^2s}}:s\in\Fcal_{u(r)}(Q,k,h)}
$$
converges weakly${}^*$ to the normalization of $2\ud x\wedge\ud y$ to a probability on $\Omega_{v(r(k,h))}$, namely to $\ud P_{v(r(k,h)}$. Summing up,
\begin{equation}\label{eq7}
\lim_{Q\to\infty}\frac{\sum\set{f(\phi'(\Lambda_{Q^2s}(Q))):s\in\Fcal_{u(r)}(Q,k,h)}}{\sharp\Fcal_{u(r)}(Q,k,h)}
=\int f\ud P_{v(r(k,h))},
\end{equation}
for every continuous function $f:\Rbb\pp^2\to\Rbb$ of compact support.

Given such a function $f$, let us write for short
\begin{align*}
\phi\Lambda_{Q^2s}&=\phi\bigl(\Lambda_{Q^2s}(Q)\bigr),\text{ and analogously for $\phi'$},\\
\alpha(Q,k,h)&=
\frac{\Sigma\set{f(\phi'\Lambda_{Q^2s}):s\in\Fcal_{u(r)}(Q,k,h)}-\set{f(\phi\Lambda_{Q^2s}):s\in\Fcal_u(Q,k,h)}}{\sharp\Fcal_{u(r)}(Q,k,h)},\\
a(Q,k)&=\sum_{h=0}^{n(Q_k)-1}\Biggl(\alpha(Q,k,h)\,\frac{\sharp\Fcal_u(Q,k,h)}{\sharp\Fcal_u(Q)}\Biggr).
\end{align*}

\begin{lemma}
Given\label{ref31} $k$ there exists $N(k)$ such that, for every $Q$ sufficiently large, both $\abs{\alpha(Q,k,h)}$ and $\abs{a(Q,k)}$ are $\le N(k)$. Also, $\lim_{k\to\infty}N(k)=0$.
\end{lemma}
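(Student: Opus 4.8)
The plan is to show that the two sums making up the numerator of $\alpha(Q,k,h)$ coincide term by term outside a set of indices whose relative cardinality is $O(\varepsilon_k)$, uniformly in $h$. Fix $k$, fix $h\in\set{0,\dots,n(Q_k)-1}$ and $Q\ge Q_k$, and put $M=\max\abs f<\infty$ (recall that $f$ is continuous of compact support). Set
\[
G=G(Q,k,h)=\set{s\in\Fcal_u(Q,k,h):\phi(\Lambda_{Q^2s})\in\Omega_{I(k,h)}^o}.
\]
For $s\in G$ the lift $\phi(\Lambda_{Q^2s})=(c',d)$ of $\tilde\phi(\Lambda_{Q^2s})$ lies in the inner triangle $\Omega_{I(k,h)}^o$, so $d\le l(k,h)\le v(r(k,h))$, which makes $Q^2s$ a hitting time for the fixed section as well; Lemma~\ref{ref34}(iii) then yields $s\in\Fcal_{u(r)}(Q,k,h)$ and $\phi'(\Lambda_{Q^2s})=\phi(\Lambda_{Q^2s})=(c',d)$. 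Running the same argument backwards (and using Lemma~\ref{ref34}(ii), which keeps $\phi'$ inside $\Omega_{v(r(k,h))}$) one gets the twin description
\[
G=\set{s\in\Fcal_{u(r)}(Q,k,h):\phi'(\Lambda_{Q^2s})\in\Omega_{I(k,h)}^o}.
\]
Hence $\phi=\phi'$ on $G\subseteq\Fcal_u(Q,k,h)\cap\Fcal_{u(r)}(Q,k,h)$, the $G$-indexed terms cancel in the numerator of $\alpha(Q,k,h)$, and
\[
\abs{\alpha(Q,k,h)}\le M\,\frac{\sharp\bigl(\Fcal_{u(r)}(Q,k,h)\bs G\bigr)+\sharp\bigl(\Fcal_u(Q,k,h)\bs G\bigr)}{\sharp\Fcal_{u(r)}(Q,k,h)}.
\]

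Next I would bound the two ``bad'' cardinalities. By the twin description and Lemma~\ref{ref34}(ii), $\Fcal_{u(r)}(Q,k,h)\bs G=\set{s\in\Fcal_{u(r)}(Q,k,h):\phi'(\Lambda_{Q^2s})\in\Omega_{v(r(k,h))}\bs\Omega_{I(k,h)}^o}$. The equidistribution statement recorded just before~\eqref{eq7} (Sarnak--Hejhal) says that, as $Q\to\infty$, the point process $\frac1{\sharp\Fcal_{u(r)}(Q,k,h)}\sum\set{\delta_{\phi'(\Lambda_{Q^2s})}:s\in\Fcal_{u(r)}(Q,k,h)}$ converges weakly${}^*$ to $P_{v(r(k,h))}$; as $\Omega_{v(r(k,h))}\bs\Omega_{I(k,h)}^o$ is a polygon with $\lambda$-null boundary, the portmanteau theorem makes $\sharp\bigl(\Fcal_{u(r)}(Q,k,h)\bs G\bigr)$ asymptotic to $\bigl(\area(\Omega_{v(r(k,h))}\bs\Omega_{I(k,h)}^o)/\area(\Omega_{v(r(k,h))})\bigr)\,\sharp\Fcal_{u(r)}(Q,k,h)$. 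An elementary area estimate for the triangles of Definition~\ref{ref27}, using $L(k,h)-l(k,h)\le\varepsilon_k$ and $0<l\le l(k,h)\le v(r(k,h))\le L(k,h)$, shows that, once $\varepsilon_k<l$, this ratio is at most $\gamma_k:=4L\varepsilon_k/l^2$, which tends to $0$. For the other cardinality I would use Lemma~\ref{ref15}, namely $\sharp\Fcal_u(Q,k,h)=(1+o(1))\,\sharp\Fcal_{u(r)}(Q,k,h)$: since $G$ lies in both sequences,
\begin{align*}
\sharp\bigl(\Fcal_u(Q,k,h)\bs G\bigr)&=\sharp\Fcal_u(Q,k,h)-\sharp G\\
&\le\sharp\Fcal_u(Q,k,h)-\sharp\Fcal_{u(r)}(Q,k,h)+\sharp\bigl(\Fcal_{u(r)}(Q,k,h)\bs G\bigr)\\
&\le(\gamma_k+o(1))\,\sharp\Fcal_{u(r)}(Q,k,h).
\end{align*}

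Combining these two estimates gives $\abs{\alpha(Q,k,h)}\le M(2\gamma_k+o(1))$ as $Q\to\infty$. I would then set $N(k)=3M\gamma_k$ once $\varepsilon_k<l$, and $N(k)=3M$ for the finitely many remaining $k$ --- for which the crude bound $\abs{\alpha(Q,k,h)}\le M\bigl(1+\sharp\Fcal_u(Q,k,h)/\sharp\Fcal_{u(r)}(Q,k,h)\bigr)\le 2M+o(1)$ already does the job; hence $N(k)\to0$. For each fixed $k$ the finitely many indices $h$ are handled by pushing $Q$ past the largest of the corresponding thresholds, so that $\abs{\alpha(Q,k,h)}\le N(k)$ for every $h$ and every sufficiently large $Q$. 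Finally, since the intervals $I(k,h)$ partition $\ooi$, the weights $\sharp\Fcal_u(Q,k,h)/\sharp\Fcal_u(Q)$ are nonnegative and sum to $1$, so $a(Q,k)$ is a convex combination of the numbers $\alpha(Q,k,h)$ and $\abs{a(Q,k)}\le\max_h\abs{\alpha(Q,k,h)}\le N(k)$.

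The main obstacle is the bookkeeping around $G$: extracting its two descriptions from Lemma~\ref{ref34} so that the bad indices of \emph{both} the sliding-section sequence and the fixed-section sequence are controlled simultaneously, and then upgrading the weak${}^*$ equidistribution of the process $\phi'(\Lambda_{Q^2s})$ to a genuine asymptotic count on the Jordan-measurable set $\Omega_{v(r(k,h))}\bs\Omega_{I(k,h)}^o$, with all implied constants uniform in $h$ for each fixed $k$.
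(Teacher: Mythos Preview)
Your proposal is correct and follows essentially the same line as the paper's proof: cancel the terms indexed by the good set $G$ via Lemma~\ref{ref34}(iii), then bound the relative size of the bad set using the equidistribution statement~\eqref{eq7} together with the elementary area estimate $\area\bigl(\Omega_{v(r(k,h))}\setminus\Omega^o_{I(k,h)}\bigr)/\area\bigl(\Omega_{v(r(k,h))}\bigr)=O(\varepsilon_k)$, and finally pass to $a(Q,k)$ by convexity. Your bookkeeping around the twin description of $G$ and the explicit appeal to the portmanteau theorem are in fact a bit more careful than the paper's rather terse passage from the two bad counts to a single one via Lemma~\ref{ref15}.
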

\begin{proof}
Take first $R=\max(f)-\min(f)$. By Lemma~\ref{ref34}(iii) $\abs{\alpha(Q,k,h)}$ is bounded from above by
\begin{multline*}
R\Biggl(
\frac{\sharp\set{s\in\Fcal_{u(r)}(Q,k,h):\phi'\Lambda_{Q^2s}\notin\Omega_{I(k,h)}^o}}
{\sharp\Fcal_{u(r)}(Q,k,h)}\\
+
\frac{\sharp\set{s\in\Fcal_u(Q,k,h):\phi\Lambda_{Q^2s}\notin\Omega_{I(k,h)}^o}}
{\sharp\Fcal_{u(r)}(Q,k,h)}
\Biggr).
\end{multline*}
Possibly increasing $R$ to take care of the asymptotic $\sharp\Fcal_{u(r)}(Q,k,h)\sim\sharp\Fcal_u(Q,k,h)$ given by Lemma~\ref{ref15}, the above sum is bounded by
\[
2R
\frac{\sharp\set{s\in\Fcal_{u(r)}(Q,k,h):\phi'\Lambda_{Q^2s}\notin\Omega_{I(k,h)}^o}}
{\sharp\Fcal_{u(r)}(Q,k,h)}.
\]
By~\eqref{eq7} this last expression tends, for $Q\to\infty$, to
\[
2R\frac{\area\bigl(\Omega_{v(r(k,h))}\setminus\Omega_{I(k,h)}^o\bigr)}{\area\bigl(\Omega_{v(r(k,h))}\bigr)}.
\]
By elementary geometric considerations, the area ratio is bounded by $4\varepsilon_k$. Taking $N(k)=8R\varepsilon_k$ gives the bound on $\abs{\alpha(Q,k,h)}$, and the bound on $\abs{a(Q,k)}$ follows easily.
\end{proof}

We can now conclude the proof of Theorem~\ref{ref19}. Since $P$ is defined via a Riemann integral, we have
\begin{align*}
\int f \ud P &= \int_0^1\biggl[\int f\ud P_{v(s)}\biggr] m(s)\ud s\\
&=\lim_{k\to\infty}\sum_{h=0}^{n(Q_k)-1}
\Biggl(\biggl[\int f\ud P_{v(r(k,h))}\biggr]\,\lambda(I(k,h))\,m(r(k,h))\Biggr)\\
&=\lim_{k\to\infty}\sum_{h=0}^{n(Q_k)-1}\lim_{Q\to\infty}
\Biggl(\frac{\sum\set{f(\phi'\Lambda_{Q^2s}):s\in\Fcal_{u(r)}(Q,k,h)}}{\sharp\Fcal_{u(r)}(Q,k,h)}
\cdot
\frac{\sharp\Fcal_u(Q,k,h)}{\sharp\Fcal_u(Q)}\Biggr).
\end{align*}
Summing and subtracting
$$
\frac{\sum\set{f(\phi\Lambda_{Q^2s}):s\in\Fcal_u(Q,k,h)}}{\sharp\Fcal_{u(r)}(Q,k,h)}
$$
to the left factor of the parenthesized product above, we get
\begin{multline*}
\int f \ud P =\\
\lim_{k\to\infty}\sum_{h=0}^{n(Q_k)-1}\lim_{Q\to\infty}
\Biggl(
\frac{\sum\set{f(\phi\Lambda_{Q^2s}):s\in\Fcal_u(Q,k,h)}}{\sharp\Fcal_u(Q)}
+\alpha(Q,k,h)\frac{\sharp\Fcal_u(Q,k,h)}{\sharp\Fcal_u(Q)}
\Biggr)\\
=\lim_{k\to\infty}\lim_{Q\to\infty}
\Biggl(\int f\ud P(Q)+a(Q,k)\Biggr)
\end{multline*}

By Lemma~\ref{ref31},
$$
\int f\ud P(Q) - N(k) \le \int f\ud P(Q) + a(Q,k),
$$
and hence
\begin{multline*}
\biggl(\limsup_{Q\to\infty}\int f\ud P(Q)\biggr)-N(k)=
\limsup_{Q\to\infty}\biggl(\int f\ud P(Q)-N(k)\biggr)\\
\le 
\limsup_{Q\to\infty}\biggl(\int f\ud P(Q)+a(Q,k)\biggr)=
\lim_{Q\to\infty}\biggl(\int f\ud P(Q)+a(Q,k)\biggr).
\end{multline*}
Taking the limit for $k\to\infty$, we get
$$
\limsup_{Q\to\infty}\int f\ud P(Q) \le \int f\ud P.
$$
By a dual argument we obtain
$$
\int f\ud P \le \liminf_{Q\to\infty} \int \ud P(Q),
$$
thus concluding the proof of Theorem~\ref{ref19}.


\end{document}